\newtheorem{thm}{Theorem}[section]
\newtheorem{cor}[thm]{Corollary}
\newtheorem{prop}[thm]{Proposition}
\newtheorem{rem}{Remark}
\numberwithin{equation}{section}
\numberwithin{figure}{section}
\newcommand{\includegraph}[2][]{\ifnum\pdfoutput=0\includegraphics[#1]{#2.eps}\else\includegraphics[#1]{#2.pdf}\fi}
\def\f(#1,#2){f(#1,#2{,\epsilon})}
\def\g(#1,#2){g(#1,#2{,\epsilon})}
\def\h(#1,#2){h(#1,#2{,\epsilon})}
\def\k(#1,#2){k(#1,#2{,\epsilon})}
\def\fo(#1,#2){f(#1,#2{,0})}
\def\go(#1,#2){g(#1,#2{,0})}
\def\ho(#1,#2){h(#1,#2{,0})}
\def\ko(#1,#2){k(#1,#2{,0})}
\def\fa(#1,#2,#3){f(#1,#2{,#3})}
\def\ga(#1,#2,#3){g(#1,#2{,#3})}
\def\ha(#1,#2,#3){h(#1,#2{,#3})}
\def\ka(#1,#2,#3){k(#1,#2{,#3})}
\let\epsilon\varepsilon
\begin{document}

\title{The entry-exit function and geometric singular perturbation theory}

\date{October 8, 2015}

\begin{abstract}
For small $\epsilon>0$, the system
$\dot x = \epsilon$,
$\dot z = \h(x,z)z$,
with $\ho(x,0)<0$ for $x<0$ and $\ho(x,0)>0$ for $x>0$, admits solutions that approach the $x$-axis while $x<0$ and are repelled from it when $x>0$.  The limiting attraction and repulsion points are given by the well-known entry-exit function.  For $\h(x,z)z$ replaced by $\h(x,z)z^2$, we explain this phenomenon using geometric singular perturbation theory.  We also show that the linear case can be reduced to the quadratic case, and we discuss the smoothness of the return map to the line $z=z_0$, $z_0>0$, in the limit $\epsilon\to0$.
\end{abstract}

\maketitle

\begin{center} 
Peter De Maesschalck \textsuperscript{a}  and Stephen Schecter \textsuperscript{b}\\
\end{center}

\vspace{.2in}

\textsuperscript{a} \address{Department of Mathematics and Statistics, Hasselt University, B-3590 Diepenbeek, Belgium,}  \email{peter.demaesschalck@uhasselt.be} \par
\textsuperscript{b} \address{Department of Mathematics, North Carolina State University, Raleigh, NC 27695-8205, USA,}  \email{schecter@math.ncsu.edu}

\vspace{.2in}

\keywords {\textbf{Keywords}: entry-exit function, geometric singular perturbation theory, bifurcation delay, blow-up, turning point}

\section{Introduction}
Consider the slow-fast planar system
\begin{align}
\dot x &= \epsilon \f(x,z), \label{eq1}\\
\dot z &= \g(x,z)z, \label{eq2}
\end{align}
with $x\in\mathbb{R}$, $z\in\mathbb{R}$,
\begin{equation}
    \label{delay-conds}
    \mbox{$\fo(x,0)>0$; \quad
    $\go(x,0)<0$ for $x<0$ and $\go(x,0)>0$ for $x>0$. }
\end{equation}

For $\epsilon=0$, the $x$-axis consists of equilibria; see Figure~\ref{fig:globalsettings}(a) below.  These equilibria are normally attracting for $x<0$ and normally repelling for $x>0$.  For $\epsilon>0$, the $x$-axis remains invariant, and the flow on it is to the right. For small $\epsilon>0$, a solution that starts at $(x_0,z_0)$, with $x_0$ negative and $z_0>0$ small, is attracted quickly toward the $x$-axis, then drifts to the right along the $x$-axis, and finally is repelled from the $x$-axis.  It reintersects the line $z=z_0$ at a point whose $x$-coordinate we denote by $p_\epsilon(x_0)$.  As $\epsilon\to0$, the  return map  $p_\epsilon(x_0)$ approaches a function $p_0(x_0)$ given implicitly by the formula
\begin{equation}
\label{inout}
\int_{x_0}^{p_0(x_0)}\frac{\go(x,0)}{\fo(x,0)}\,dx=0.
\end{equation}
 In other words, the solution does not leave the $x$-axis as soon as it becomes unstable at $x=0$; instead the solution stays near the $x$-axis until a repulsion has built up to balance the attraction that occurred before $x=0$.  The function $p_0$ is called the entry-exit \cite{benoit81} or way in-way out \cite{diener84} function.

This phenomenon, in which a solution of a slow-fast system stays near a curve of equilibria of the slow limit system after it has become unstable, and leaves at a point given by an integral like \eqref{inout}, has been called ``Pontryagin delay'' \cite{mkkr} or ``bifurcation delay'' \cite{dynbif}.  As far as we know, it was originally discovered in a  different context, in which the fast variable $z$ in \eqref{eq2} is two-dimensional and, for $\epsilon=0$, the equilibrium at $z=0$ undergoes a Hopf bifurcation as $x$ passes 0; see \cite{siskova}, which was written under the direction of Pontryagin.  In this situation, it turns out that the delay phenomenon need not occur if the system is not analytic.  See \cite{neishtadt} for a recent survey.

For the system \eqref{eq1}--\eqref{delay-conds}, Pontryagin delay and the entry-exit function are discussed in \cite{mkkr,haberman79, schecter85,dem08}.
Methods include asymptotic expansions \cite{mkkr, haberman79}, comparison to solutions constructed by separation of variables \cite{schecter85}, and direct estimation of the solution and its derivatives using the variational equation \cite{dem08}.  The last paper gives the most complete results.

Note that for the system \eqref{eq1}--\eqref{delay-conds} with $\epsilon=0$, the line of equilibria along the $x$-axis loses normal hyperbolicity at the ``turning point'' $x=0$.  The blow-up method of geometric singular perturbation theory \cite{dum-rouss96, ks01-1} is today the method of choice for understanding loss of normal hyperbolicity.  However, unless nongenericity conditions are imposed at the turning point \cite{ddm-2005}, neither spherical blow-up of the turning point nor cylindrical blow-up along the $x$-axis appears to help with this problem.   Even in the nongeneric cases where blow-up does helps, it probably does not yield optimal smoothness results.

Pontryagin delay is also encountered in the codimension-one bifurcation of slow-fast systems that gives rise to the solutions known as canards; see \cite{benoit81,diener84}.  Consider for example the system
\begin{align}
\dot x &= \epsilon \f(x,z)=\epsilon(ax+bz+\ldots), \label{canard1}\\
\dot z &= \g(x,z)=-(x+cz^2+\ldots), \label{canard2}
\end{align}
with $b$ and $c$ positive.
The omitted terms in the first equation are higher order; those in the second consist of other quadratic terms and higher-order terms.  This system is codimension-one in the context of slow-fast systems because the slow nullcline $f=0$ passes through the parabolic vertex of the the fast nullcline $\g(x,z)=0$.  For $\epsilon=0$, near the origin, the parabolic curve $\g(x,z)=0$ consists of equilibria that are attracting for $z>0$ and repelling for $z<0$.  A typical solution for small $\epsilon>0$ is shown in Figure \ref{fig:canard}.  Pontryagin delay in this context has been studied using nonstandard analysis \cite{benoit81}, asymptotic expansions \cite{mkkr}, complex analysis \cite{complex}, and blow-up \cite{dum-rouss96, ks01-1}.

\begin{figure}[htb]
\includegraph[width=2.5in]{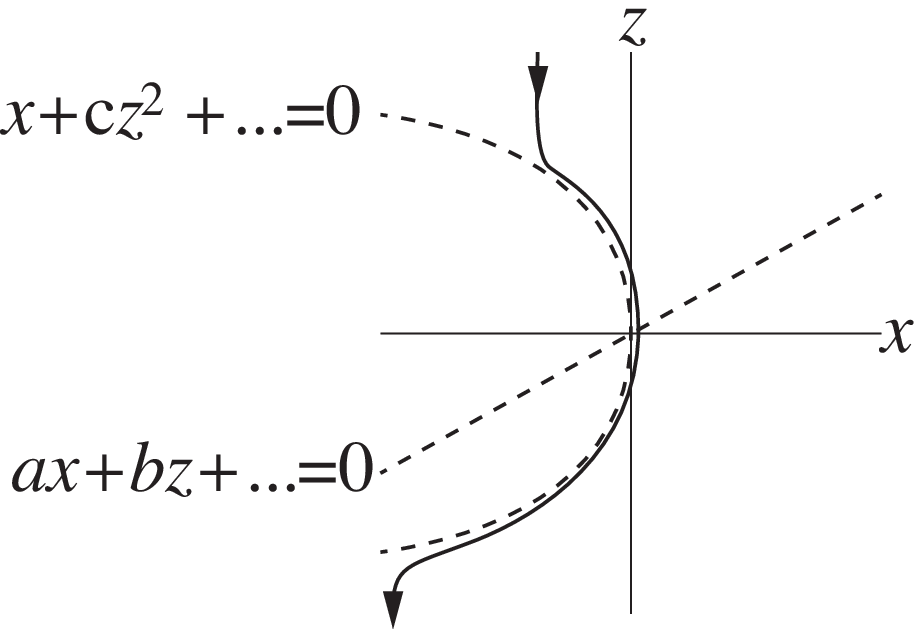}
\caption{Nullclines of \eqref{canard1}--\eqref{canard2}, and a typical solution for small $\epsilon>0$.}
\label{fig:canard}
\end{figure}

In contrast to the system  \eqref{eq1}--\eqref{delay-conds}, the system \eqref{canard1}--\eqref{canard2} for $\epsilon\neq0$ does not have  an {\em a priori} known solution near the curve of equilibria for $\epsilon=0$.  The system \eqref{canard1}--\eqref{canard2} is related to the following generalization of  \eqref{eq1}--\eqref{delay-conds}:
\begin{align}
\dot x &= \epsilon \f(x,z), \label{eq1-g}\\
\dot z &= \g(x,z)z+\epsilon \h(x,z), \label{eq2-g}
\end{align}
with \eqref{delay-conds} assumed.  For $\epsilon \neq 0$ there is in general no {\em a priori} known solution near the axis.  Like \eqref{canard1}--\eqref{canard2}, this system can be studied under generic assumptions using blow-up \cite{ddm-2005}.

In this paper we establish the entry-exit relation for \eqref{eq1}--\eqref{delay-conds} indirectly based on blow-up, without making nongeneric assumptions. The construction uses cylindrical blow-up, explains the phenomenon geometrically, and yields $C^{\infty}$-smoothness of the return map.  In contrast, the paper \cite{ddm-2005} yields smoothness in terms of some root of $\epsilon$ and $\epsilon\log\epsilon$ for the cases it treats.

We first consider, instead of \eqref{eq1}--\eqref{delay-conds}, the apparently more degenerate problem
\begin{align}
\dot x &= \epsilon \f(x,z), \label{eq3}\\
\dot z &= \g(x,z)z^2, \label{eq4}
\end{align}
with $f$ and $g$ satisfying \eqref{delay-conds}, which arose in the study of relaxation oscillations in the Holling-Tanner predator-prey model \cite{gms}.  For small $\epsilon>0$, a solution of \eqref{eq3}--\eqref{eq4} that starts at $(x_0,z_0)$, with $x_0$ negative and $z_0>0$, behaves just as in the first paragraph of this paper; see Figure~\ref{fig:globalsettings}(b). We show that, in contrast to the case for \eqref{eq1}--\eqref{eq2}, in the case of  \eqref{eq3}--\eqref{eq4} there is a nice geometric explanation using blow-up.
\begin{figure}
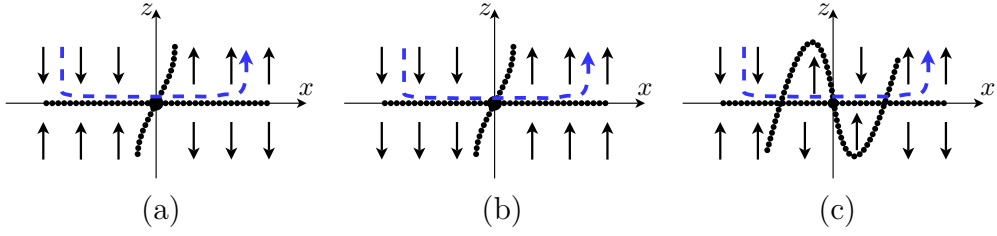

    \centerline{\includegraph{phaseportrait}}
    \centerline{(a)\hspace{4cm}(b)\hspace{3.1cm}\qquad(c)}
    \caption{Dynamics for $\epsilon=0$ and an orbit for small $\epsilon>0$ in dashed line.  (a) For system \eqref{eq1}--\eqref{eq2}.  (b) For system \eqref{eq3}--\eqref{eq4}.  (c) Case of multiple turning points.}
    \label{fig:globalsettings}
\end{figure}

We shall say that a  function $h(x_1,\ldots,x_m,z)$ {\em has property $\mathcal{F}$ in $z$} if $h(x_1,\ldots,x_m,z)-h(x_1,\ldots,x_m,0)$ is  flat in $z$ as $z\to 0$, i.e.,
    \[
        |h(x_1,\ldots,x_m,z) - h(x_1,\ldots,x_m,0)| = \mathcal{O}(z^N) \mbox{ as } z\to 0 \mbox{ for all } N>0.
    \]
Using blow-up, we shall prove the following result.

\begin{thm}
\label{th:main}
Consider \eqref{eq3}--\eqref{eq4}, where $f$ and $g$ are $C^\infty$ and satisfy \eqref{delay-conds}.  Choose $x_0^*<0$ such that $p_0(x_0^*)$ can be defined using \eqref{inout}, i.e.,
$\int_{x_0^*}^{p_0(x_0^*)}\frac{\go(x,0)}{\fo(x,0)}\,dx=0$.  If $z_0>0$ and $\epsilon_0>0$ are sufficiently small, and $I_0$ is a sufficiently small neighborhood of $x_0^*$, then:
\begin{enumerate}
\item For $0<\epsilon<\epsilon_0$ and $x_0 \in I_0$, the solution of  \eqref{eq1}--\eqref{eq2} through $(x_0,z_0)$
first reintersects the line $z=z_0$ in a point $(x,z)=(p_\epsilon(x_0),z_0)$.
\item Define $p:I_0\times [0,\epsilon_0)\to\mathbb{R}$ by $p(x_0,\epsilon)=p_\epsilon(x_0)$.  (Thus for $\epsilon=0$, $p$ is defined using $p_0$.) Then there is a $C^\infty$ function $\tilde p$ of three variables such that $p(x_0,\epsilon)=\tilde p(x_0,\epsilon,\epsilon\log\epsilon)$.
\item If  $\g(x,z)/\f(x,z)$ has property $\mathcal{F}$ in $z$, then $p$ is a $C^\infty$ function of $(x_0,\epsilon)$.
\end{enumerate}
\end{thm}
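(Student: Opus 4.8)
The plan is to desingularize the whole $x$-axis $\{z=\epsilon=0\}$ of the extended system (adjoin $\dot\epsilon=0$) by a cylindrical blow-up of the $(z,\epsilon)$-directions with weights $(1,1)$, and to read off $p_\epsilon$ as a composition of three transition maps. Work in the directional charts $K_1:\ z=r_1,\ \epsilon=r_1\epsilon_1$ and $K_2:\ z=\epsilon z_2,\ \epsilon=r_2$, with $x$ a parameter of the blow-up. Dividing the $K_1$ field by $r_1$ (a time rescaling for $r_1>0$, smooth at $r_1=0$) gives
\[
\frac{dr_1}{ds}=g\,r_1,\qquad \frac{d\epsilon_1}{ds}=-g\,\epsilon_1,\qquad \frac{dx}{ds}=\epsilon_1 f,
\]
with $f,g$ evaluated at $(x,r_1,r_1\epsilon_1)$ and the product $r_1\epsilon_1=\epsilon$ conserved; the line $\{r_1=\epsilon_1=0\}$ consists of equilibria that, for $x<0$ (resp.\ $x>0$), are normally hyperbolic of saddle type --- eigenvalues $g(x,0,0)<0$ along $r_1$ and $-g(x,0,0)>0$ along $\epsilon_1$ (resp.\ the reverse), center direction $x$ --- and the eigenvalue ratio is exactly $1:(-1)$, a resonance. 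Dividing the $K_2$ field by $r_2=\epsilon$ gives $\dfrac{dx}{ds}=f(x,\epsilon z_2,\epsilon)$, $\dfrac{dz_2}{ds}=g(x,\epsilon z_2,\epsilon)z_2^2$, a family depending \emph{smoothly} on $\epsilon$ whose $\epsilon=0$ member has $\dot x=f(x,0,0)>0$, hence no equilibria, so the turning point $x=0$ is an ordinary point; along its orbits $\frac{d}{dx}(1/z_2)=-g(x,0,0)/f(x,0,0)$, so the $K_2$ transition map between the two crossings of $\{z_2=1/\delta\}$ (for fixed small $\delta>0$) carries $x_{\mathrm{in}}$ to $p_0(x_{\mathrm{in}})$ at $\epsilon=0$ and is $C^\infty$ in $(x_{\mathrm{in}},z_2,\epsilon)$. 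Concatenating the attracting-side $K_1$ passage from $\{r_1=z_0\}$ to $\{\epsilon_1=\delta\}$, this $K_2$ passage, and the repelling-side $K_1$ passage from $\{\epsilon_1=\delta\}$ back to $\{r_1=z_0\}$, and noting that $z$ is unimodal along the solution, yields part~(1) and the value $p(x_0^*,0)=p_0(x_0^*)$.

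The core is the two $K_1$ passages near the resonant saddle-line. Since $r_1\epsilon_1=\epsilon$ and $r_1$ is strictly decreasing along the attracting-side orbit, $\epsilon_1=\epsilon/r_1$ and
\[
\frac{dx}{dr_1}=\frac{\epsilon}{r_1^{\,2}}\,\frac{f}{g}\bigl(x(r_1),r_1,\epsilon\bigr),\qquad \Delta x=\int_{z_0}^{\epsilon/\delta}\frac{\epsilon}{r_1^{\,2}}\,\frac{f}{g}\bigl(x(r_1),r_1,\epsilon\bigr)\,dr_1,
\]
with an a priori bound $|\Delta x|=O(\delta)$ that keeps $x<0$ throughout (the repelling side is symmetric). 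I would expand $f/g$ in $r_1$ and $\epsilon$ and solve this integral equation for $x(r_1)$ iteratively. The part of $f/g$ independent of $r_1$ gives, after the substitution $\sigma=\epsilon/r_1$, the \emph{regular} ODE $\frac{dx}{d\sigma}=-\frac{f}{g}(x,0,\epsilon)$ on $[\,\epsilon/z_0,\delta\,]$, whose time-$\delta$ map is $C^\infty$ in $(x_0,\epsilon)$ (this absorbs the $\delta$-terms); a remainder flat in $r_1$ contributes $\epsilon\,C_0(x_0,\epsilon)+O(\epsilon^\infty)$, being integrable at $r_1=0$ with integral over $[0,\epsilon/\delta]$ flat in $\epsilon$; and logarithms are created only through the genuine $r_1$-dependence of $f/g$ --- the lowest-order one being, if $c_1(x,\epsilon)r_1$ is the $O(r_1)$ part, $\int_{z_0}^{\epsilon/\delta}\frac{\epsilon}{r_1^{2}}\,c_1 r_1\,dr_1=c_1\,\epsilon\bigl(\ln(\epsilon/\delta)-\ln z_0\bigr)$, which is $\epsilon\log\epsilon$ to leading order; the $1:(-1)$ resonance is precisely what makes such a passage contribute a logarithm of $\epsilon$ rather than a fractional power. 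Iterating, each $K_1$ transition map is $C^\infty$ in $(x,\epsilon,\epsilon\log\epsilon)$, its asymptotic expansion running over the monomials $\epsilon^j(\log\epsilon)^k$ with $k\le j$, with matching remainder and derivative estimates. Composing the three maps --- the dependence on the bookkeeping parameter $\delta$ must cancel, since $p_\epsilon$ does not see it --- produces the $C^\infty$ function $\tilde p(x_0,\epsilon,\epsilon\log\epsilon)$ of part~(2).

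For part~(3), property $\mathcal F$ of $g/f$ in $z$ passes to $f/g$, so in $K_1$ (where $z=r_1$) one has $\frac{f}{g}(x,r_1,\epsilon)=\frac{f}{g}(x,0,\epsilon)+\varphi(x,r_1,\epsilon)$ with $\varphi$ flat in $r_1$. The first summand has no genuine $r_1$-dependence and its contribution to $\Delta x$ was just seen to be $C^\infty$ in $(x_0,\epsilon)$, while $\int_{z_0}^{\epsilon/\delta}\frac{\epsilon}{r_1^{2}}\varphi\,dr_1=\epsilon\bigl[\int_0^{z_0}\frac{\varphi}{r_1^{2}}\,dr_1-\int_0^{\epsilon/\delta}\frac{\varphi}{r_1^{2}}\,dr_1\bigr]=\epsilon\,C_0(x_0,\epsilon)+O(\epsilon^\infty)$ as above. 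Hence no logarithmic monomials survive, and the $K_1$ maps --- so $p$ --- are $C^\infty$ in $(x_0,\epsilon)$.

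The step I expect to be the main obstacle is the rigorous, \emph{uniform-in-$x$} version of the second paragraph: showing that the Dulac-type transition map across the resonant saddle-line $\{r_1=\epsilon_1=0\}$, composed with the slow $x$-drift, produces only the combination $\epsilon\log\epsilon$ --- no worse powers of $\log\epsilon$ --- and is genuinely $C^\infty$ in $(x,\epsilon,\epsilon\log\epsilon)$, with uniform control of all remainders and derivatives as the base point runs over the relevant attracting (or repelling) $x$-interval; equivalently, constructing a normal form for the blown-up field near this line that is $C^\infty$ in $(\epsilon,\epsilon\log\epsilon)$ with that uniformity, then matching it $C^\infty$-smoothly with the rescaling chart $K_2$ across the overlap $\{r_1\sim\epsilon\}$. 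Once these estimates and the matchings are in hand, parts~(1)--(3) follow by composing finitely many $C^\infty$ maps.
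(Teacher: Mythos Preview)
Your strategy matches the paper's: the same cylindrical blow-up of $\{z=\epsilon=0\}$ with weights $(1,1)$, the same three-piece decomposition (attracting corner / middle transit / repelling corner), the same identification of the $1{:}(-1)$-resonant saddle line as the sole source of $\epsilon\log\epsilon$, and the same mechanism for part~(3). The paper in fact stays in a single affine chart $E=\epsilon/z$ (your $\epsilon_1$) throughout---your $K_2$ is just the reparametrization $z_2=1/E$---so that difference is cosmetic.

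The one substantive divergence is in how the saddle passage is handled, precisely the step you flag as the main obstacle. Where you propose to integrate $dx/dr_1=(\epsilon/r_1^{2})(f/g)$ directly, expand $f/g$ in $r_1$, and iterate, the paper first reduces the $x$-equation to a normal form $\dot{\bar x}=\epsilon\,a(\bar x,\epsilon)+\epsilon^{N+2}b(\bar x,z,E)$ by a finite sequence of $C^\infty$ coordinate changes (their Propositions~3.1--3.2), then switches to the logarithmic variables $(\bar z,\bar E)=(\epsilon\log z,\epsilon\log E)$ and the slow time $\tau=t/\epsilon$. In those variables the vector field is $C^N$ on a region of diameter $O(\epsilon\log\epsilon)$ and the passage takes $\tau$-time $O(\epsilon\log\epsilon)$, so the flow is manifestly $C^N$ in $(\bar x_0,\epsilon,\epsilon\log\epsilon)$; a Borel argument then upgrades the family of $C^N$ representations to a single $C^\infty$ one. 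This buys exactly the uniform-in-$x$ derivative control you were worried about, without an iteration, and it makes part~(3) transparent: property~$\mathcal F$ forces $a\equiv0$ and $b$ flat in $z$ in the normal form, whence $\Delta\bar x=O(\epsilon^{N+2}\log\epsilon)=o(\epsilon^{N+1})$ for every $N$ and no $v$-dependence survives in the Taylor jet. Your direct-integration route is viable and would reach the same expansion, but the normal-form detour is what turns the heuristic ``only $\epsilon\log\epsilon$, with $k\le j$'' into a clean finite-time $C^N$ statement.
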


The dependence of $p$ on $\epsilon \log \epsilon$ results from the fact that, after blow-up, solutions must pass by a line of saddle equilibria with positive and negative eigenvalues of equal magnitude.  Because of this resonance, changes of coordinates leave higher-order terms of every order.  We note that in other applications of the blow-up technique, flow past a resonant saddle leads to dependence of functions on a root of $\epsilon$ as well as on $\epsilon \log \epsilon$.
It is common for the blow-up technique to lead to flow past a resonant saddle, which sometimes results in suboptimal results.  Here, the result in Theorem~\ref{th:main} is optimal.  To demonstrate this, we will show in Section~\ref{sec:ex} that the  return map for
\begin{align}
\dot x &= \epsilon, \label{ex1}\\
\dot z &=  (x+\alpha z) z^2  \label{ex2}
\end{align}
has, for fixed small $\alpha$, finite differentiability due to the appearance of a logarithmic term in the expansion.

Conclusion (3) of Theorem \ref{th:main} can be used to treat the system \eqref{eq1}--\eqref{eq2} by reducing it to \eqref{eq3}--\eqref{eq4}.  Indeed, the change of variables $\mathbb{R}\times\mathbb{R}_+\to\mathbb{R}\times[0,1)$ given by $(x,w)\to(x,z)$ with
$$
z=\kappa(w)=\begin{cases}
e^{-\frac{1}{w}} & \mbox{if } w>0, \\
0& \mbox{if } w=0,
\end{cases}
$$
converts \eqref{eq1}--\eqref{eq2} to
\begin{align}
\dot x &= \epsilon \f(x,{\kappa(w)}), \label{eq3c}\\
\dot w &= \g(x,{\kappa(w)})w^2, \label{eq4c}
\end{align}
Note that $\f(x,{\kappa(w)})$ and $\g(x,{\kappa(w)})$ are as smooth as $f$ and $g$, and have property $\mathcal{F}$ in $w$.  Therefore the third conclusion of Theorem \ref{th:main} applies to \eqref{eq3c}--\eqref{eq4c}.  Interpreting in terms of the original system, we have

\begin{cor}\label{cor:main}
Conclusions (1) and (2) of Theorem \ref{th:main} remain true when the system \eqref{eq3}--\eqref{eq4} is replaced by the system \eqref{eq1}--\eqref{eq2}.  Moreover, $p$ is a $C^\infty$ function of $(x_0,\epsilon)$.
\end{cor}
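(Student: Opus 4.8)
The plan is to deduce the corollary from conclusion~(3) of Theorem~\ref{th:main} by exhibiting \eqref{eq1}--\eqref{eq2} as a disguised instance of \eqref{eq3}--\eqref{eq4} via the flat substitution $z=\kappa(w)$ introduced above. First I would record the properties of $\kappa$ that make this work: $\kappa$ is $C^\infty$ on $[0,\infty)$ with $\kappa(0)=0$ and all derivatives vanishing at $w=0$ (the standard flat function), it is strictly increasing, it maps $(0,\infty)$ diffeomorphically onto $(0,1)$, and $\kappa'(w)=w^{-2}\kappa(w)$. Differentiating $z=\kappa(w)$ along solutions of \eqref{eq1}--\eqref{eq2} gives $\kappa'(w)\dot w=\dot z=g(x,\kappa(w),\epsilon)\kappa(w)$, so dividing by $\kappa'(w)=w^{-2}\kappa(w)$ yields $\dot w=g(x,\kappa(w),\epsilon)w^2$, while $\dot x=\epsilon f(x,\kappa(w),\epsilon)$, i.e.\ exactly \eqref{eq3c}--\eqref{eq4c}. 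The map $\Phi(x,w)=(x,\kappa(w))$ is a homeomorphism $\mathbb{R}\times[0,\infty)\to\mathbb{R}\times[0,1)$, a diffeomorphism on $\{w>0\}$, and it conjugates the flow of \eqref{eq3c}--\eqref{eq4c} to that of \eqref{eq1}--\eqref{eq2} \emph{with the same time parametrization}; crucially, $\Phi$ fixes the $x$-coordinate.

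Next I would check that \eqref{eq3c}--\eqref{eq4c} is an admissible instance of \eqref{eq3}--\eqref{eq4} satisfying the hypothesis of conclusion~(3) of Theorem~\ref{th:main}. Set $\tilde f(x,w,\epsilon)=f(x,\kappa(w),\epsilon)$ and $\tilde g(x,w,\epsilon)=g(x,\kappa(w),\epsilon)$; these are $C^\infty$ as compositions of $C^\infty$ maps, and since $\kappa(0)=0$ the sign conditions \eqref{delay-conds} are inherited ($\tilde f(x,0,0)=f(x,0,0)>0$, and $\tilde g(x,0,0)=g(x,0,0)$ changes sign at $x=0$). Because $g$ is $C^1$ in $z$ and $\kappa$ is flat, $|\tilde g(x,w,\epsilon)-\tilde g(x,0,\epsilon)|=|g(x,\kappa(w),\epsilon)-g(x,0,\epsilon)|=\mathcal{O}(\kappa(w))=\mathcal{O}(w^N)$ for every $N$ as $w\to0$, uniformly on compact sets; hence $\tilde g$ has property~$\mathcal{F}$ in $w$, and likewise $\tilde f$. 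Since $f$ does not vanish near the axis $z=0$, $\tilde f$ is bounded away from $0$ on the relevant region, and writing $\tilde g/\tilde f-(\tilde g/\tilde f)|_{w=0}$ over the common denominator $\tilde f(x,w,\epsilon)\tilde f(x,0,\epsilon)$ exhibits it as a quotient whose numerator is a combination of $\tilde g(x,w,\epsilon)-\tilde g(x,0,\epsilon)$ and $\tilde f(x,w,\epsilon)-\tilde f(x,0,\epsilon)$, so $\tilde g/\tilde f$ has property~$\mathcal{F}$ in $w$ as well. Finally $\tilde g(x,0,0)/\tilde f(x,0,0)=g(x,0,0)/f(x,0,0)$, so \eqref{eq3c}--\eqref{eq4c} has the same entry--exit integrand, and therefore the same function $p_0$, as \eqref{eq1}--\eqref{eq2}.

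Now I would apply conclusion~(3) of Theorem~\ref{th:main} to \eqref{eq3c}--\eqref{eq4c} with the same $x_0^*$: for $w_0>0$ small (so that $z_0:=\kappa(w_0)$ stays small and within the domain of $f,g$), $\epsilon_0>0$ small, and $I_0$ a small neighborhood of $x_0^*$, and for $0<\epsilon<\epsilon_0$ the solution through $(x_0,w_0)$ first reintersects $w=w_0$ in $(q(x_0,\epsilon),w_0)$, where $q$ is a $C^\infty$ function on $I_0\times[0,\epsilon_0)$ with $q(x_0,0)=p_0(x_0)$. Applying the conjugacy $\Phi$, which carries the line $w=w_0$ to $z=z_0$ and preserves $x$ and time, the solution of \eqref{eq1}--\eqref{eq2} through $(x_0,z_0)$ first reintersects $z=z_0$ in $(q(x_0,\epsilon),z_0)$; that is, $p_\epsilon(x_0)=q(x_0,\epsilon)$. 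This is conclusion~(1) for \eqref{eq1}--\eqref{eq2}; since $q$ is $C^\infty$ in $(x_0,\epsilon)$, conclusion~(2) holds with $\tilde p(x_0,u,v):=q(x_0,u)$ (independent of its third argument), which is simultaneously the ``moreover'' claim; and $q(x_0,0)=p_0(x_0)$ is consistent with the convention that $p(\cdot,0)$ is defined through $p_0$.

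I do not expect a genuine obstacle: the analytic content lies entirely in Theorem~\ref{th:main}, and this argument is essentially bookkeeping. The only point requiring care is the interface $w=0$ ($z=0$): one must verify that $z=\kappa(w)$ turns \eqref{eq1}--\eqref{eq2} into a system that is genuinely $C^\infty$ up to and including the axis and still of the degenerate form \eqref{eq3}--\eqref{eq4} with $\tilde g/\tilde f$ having property~$\mathcal{F}$ in $w$ — which is exactly where the flatness of $\kappa$ enters — together with the observation that, because $\Phi$ fixes the $x$-coordinate, the return maps of the two systems coincide as functions rather than merely being conjugate.
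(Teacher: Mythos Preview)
Your proposal is correct and follows exactly the approach the paper itself uses: the corollary is deduced from conclusion~(3) of Theorem~\ref{th:main} via the substitution $z=\kappa(w)=e^{-1/w}$, which (because $\kappa'(w)=w^{-2}\kappa(w)$) transforms \eqref{eq1}--\eqref{eq2} into the quadratic system \eqref{eq3c}--\eqref{eq4c} whose coefficients have property~$\mathcal{F}$ in $w$. Your write-up simply fills in the details the paper leaves implicit, notably that the conjugacy fixes the $x$-coordinate so the return maps coincide, and that the entry--exit integrand and hence $p_0$ are unchanged.
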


What's more, we can relax the conditions on Corollary~\ref{cor:main} to an extent we haven't seen in the literature before.  Suppose conditions \eqref{delay-conds} are replaced by
\[
    \fo(x,0)>0; \qquad \go(x_0^*,0) < 0\text{ and }\go({p_0(x_0^*)},0) > 0,
\]
where $p_0$ is defined through \eqref{inout} (selecting the leftmost point where the integral is balanced in case multiple balancing points are possible).  Then Corollary~\ref{cor:main} remains valid.  In other words, what matters is to have attraction towards $z=0$ at the entry point and repulsion at the exit point; what lies in between can be anything.  With this in mind, we can treat passages through multiple turning points (see Figure~\ref{fig:globalsettings}(c)) and show that the exit point is given as a smooth perturbation of the leftmost point where the integral \eqref{inout} balances.

The reader may wonder whether one can reduce to \eqref{eq3}--\eqref{eq4} to \eqref{eq1}--\eqref{eq2} by a coordinate change, and thereby use the results of \cite{dem08} to understand \eqref{eq3}--\eqref{eq4}.  We do not see how to do this; the inverse of the coordinate change $z=\kappa(w)$ is not sufficiently differentiable.

The organization of the paper is as follows: we prove Theorem \ref{th:main} using blow-up in Section \ref{sec:bu}, but delay the somewhat technical part, a treatment of the flow past a line of resonant saddles, to Section \ref{sec:local}.

\begin{rem}\label{rem}
The assumptions in Theorem \ref{th:main} that $f$ and $g$ only depend on $(x,z,\epsilon)$ and are $C^\infty$ are there to simplify the proof.  The theorem remains true if $f$ and $g$ are functions of $(x,z,\epsilon,\alpha)$, where $\alpha$ is a finite-dimensional additional parameter.  In Section \ref{sec:finite} we discuss removing the $C^\infty$ assumption.
\end{rem}

\section{Blow-up\label{sec:bu}}

To prove Theorem  \ref{th:main}, we first note that in a neighborhood of the $x$-axis, we have $\f(x,z)>0$, so we can divide the system \eqref{eq3}--\eqref{eq4} by $f$, yielding
\begin{align}
\dot x &= \epsilon , \label{eq5}\\
\dot z &= \h(x,z)z^2, \label{eq6}
\end{align}
with $h=g/f$.

We extend  \eqref{eq5}--\eqref{eq6} to $xz\epsilon$-space:
\begin{align}
\dot x &= \epsilon, \label{eq3e} \\
\dot z &= \h(x,z)z^2, \label{eq4e} \\
\dot \epsilon &=0. \label{eq5e}
\end{align}
We then blow up the $x$-axis in $xz\epsilon$-space, which consists of equilibria of \eqref{eq3e}--\eqref{eq5e}, to a cylinder as follows.  Let $(x,(\bar z, \bar \epsilon), r)$ be a point of $\mathbb{R} \times S^1 \times \mathbb{R}_+$; we have $\bar z^2 + \bar \epsilon^2=1$.  The blow-up transformation is a map from $\mathbb{R} \times S^1 \times \mathbb{R}_+$ to $xz\epsilon$-space given by
\begin{align*}
x & = x, 
\\ z &= r\bar z, 
\\ \epsilon &= r \bar \epsilon.
\end{align*}
The system \eqref{eq3e}--\eqref{eq5e} pulls back to one on $\mathbb{R} \times S^1 \times \mathbb{R}_+$.  The system we shall study is this one divided by $r$.  Division by $r$ desingularizes the system on the cylinder $r=0$ but leaves it invariant.

\subsection{Polar coordinates}  The blow-up can be visualized most completely in polar coordinates, i.e., for $(x,(\bar z, \bar \epsilon), r) \in \mathbb{R} \times S^1 \times \mathbb{R}_+$, we set $\bar z =\cos\theta$ and $\bar\epsilon=\sin\theta$.  Thus we use coordinates $(x,\theta,r)$ with $\theta$ interpreted as an angle modulo $2\pi$.  In terms of the original coordinates $(x,z,\epsilon)$, we have
\begin{align*}
    x & = x, 
\\  z &= r\cos\theta, 
\\  \epsilon &= r\sin\theta. 
\end{align*}
After making the coordinate change and dividing by $r$, the system \eqref{eq3e}--\eqref{eq5e} becomes
\begin{align}
\dot x &= \sin\theta, \label{p1} \\
\dot r &= r\cos^3\theta \, \ha(x,r\cos\theta,r\sin\theta), \label{p2} \\
\dot \theta &=-\cos^2\theta\sin\theta \, \ha(x,r\cos\theta,r\sin\theta). \label{p3}
\end{align}

\begin{figure}[htb]
\includegraph[width=4in]{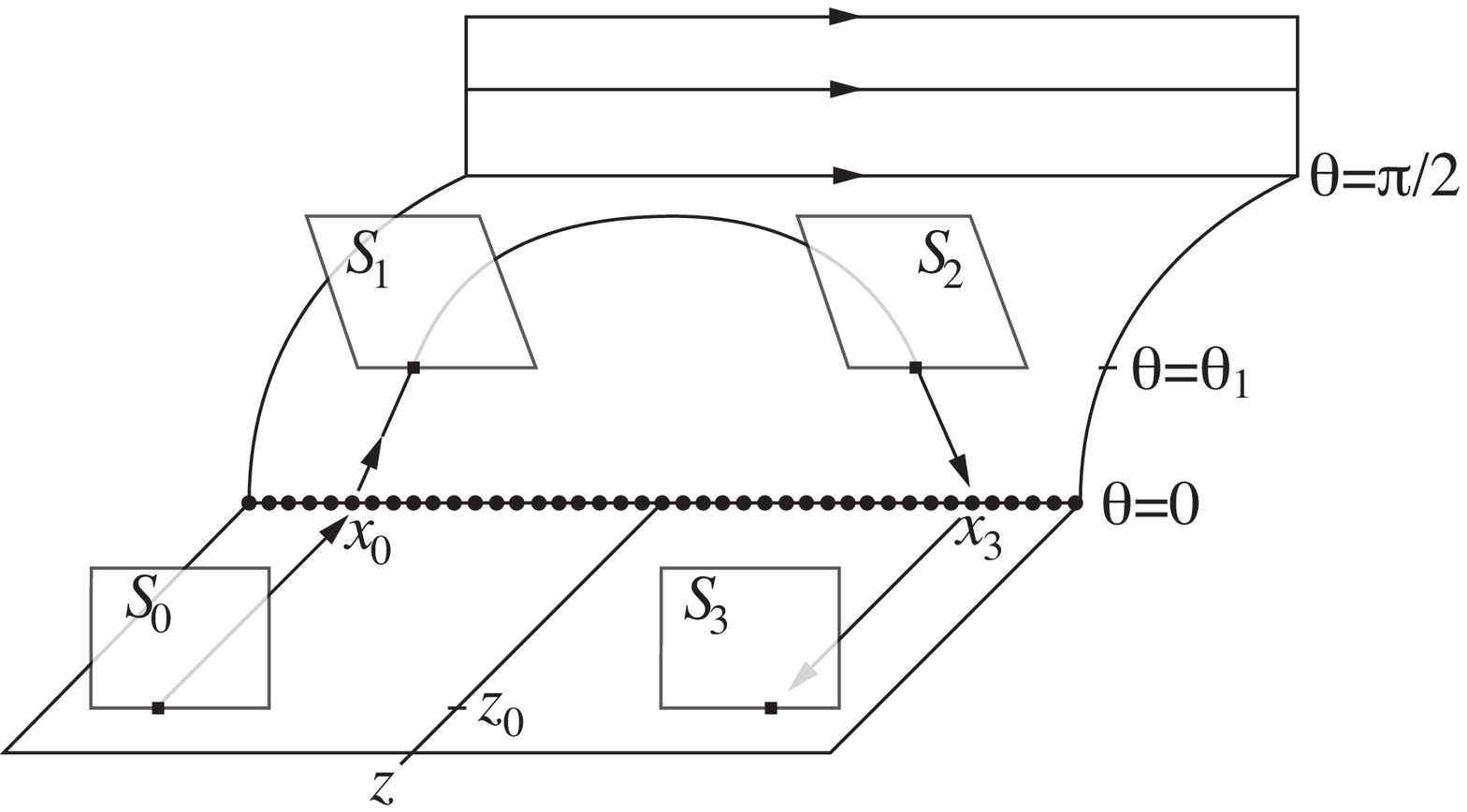}
\caption{Flow of \eqref{p1}--\eqref{p3}.}
\label{fig:flow}
\end{figure}

A portion of the flow of this system is pictured in Figure \ref{fig:flow}.
\begin{itemize}
\item The quarter cylinder is the portion of the cylinder $r=0$ between $\theta=0$ and $\theta=\frac{\pi}{2}$. The cylinder $r=0$ corresponds to the $x$-axis in $xz\epsilon$-space.  It is invariant because $r=0$ implies $\dot r=0$.  On it the system \eqref{p1}--\eqref{p3} reduces to
$$
\dot x = \sin\theta, \quad \dot \theta =-\cos^2\theta\sin\theta \, \ho(x,0).
$$
A typical solution is shown.
\item The horizontal plane is the portion of the plane $\theta=0$ with $r\ge0$.  It corresponds to the portion of the $xz$-plane in $xz\epsilon$-space with $z\ge0$.  The plane $\theta=0$ is invariant because $\theta=0$ implies $\dot\theta=0$.  On it the system \eqref{p1}--\eqref{p3} reduces to
$$
\dot x = 0, \quad \dot r = r \ho(x,r).
$$
Solutions have $x=$ constant.  Two solutions are  shown.
\item The intersection of the cylinder $r=0$ and the horizontal plane  $\theta=0$ is a line of equilibria.  For $x\neq0$ these equilibria have negative and positive eigenvalues of equal magnitude, and a zero eigenvalue.
\item The vertical plane is the portion of the plane $\theta=\frac{\pi}{2}$ with $r\ge0$.  It corresponds to the portion of the $x\epsilon$-plane in $xz\epsilon$-space with $\epsilon\ge0$.  The plane $\theta=\frac{\pi}{2}$ is invariant because $\theta=\frac{\pi}{2}$ implies $\dot\theta=0$.
On it the system \eqref{p1}--\eqref{p3} reduces to
$$
\dot x = 1, \quad \dot r =0.
$$
The solutions are lines.  For $r>0$, the line corresponds to the invariant line $z=0$, $\epsilon=r$ in $xz\epsilon$-space.
\end{itemize}

The function $p$ described in Theorem \ref{th:main} can be used to define a mapping between two rectangles in $xz\epsilon$-space.  Using the interval $I_0$ of the theorem, the domain  is
$$
R_0=\{(x,z,\epsilon) : x\in I_0, \, z=z_0, \, 0\le\epsilon<\epsilon_0\}.
$$
The codomain can be defined using a second interval $I_3$:
$$
R_3=\{(x,z,\epsilon) : x\in I_3, \, z=z_0, \, 0\le\epsilon<\epsilon_0\}.
$$
The mapping is $\hat P(x,z_0,\epsilon)=(p(x,\epsilon),z_0,\epsilon)$.

The rectangles $R_0$ and $R_3$ correspond to rectangles $S_0$ and $S_3$ in blow-up space.  They are pictured in Figure \ref{fig:flow}.

The three solutions shown in Figure \ref{fig:flow} combine to comprise a ``singular solution" from $(x,r,\theta)=(x_0,z_0,0)$ to $(x,r,\theta)=(x_3,z_0,0)$.   For small $\theta_0>0$, a solution that starts at $(x,r,\theta)=(x_0,z_0,\theta_0)\in S_0$ closely follows this singular solution until it arrives at $S_3$.  Thus we have a mapping $P$ from  $S_0$ to $S_3$  obtained by following a singular solution for $\theta_0=0$, and following an actual solution for $\theta_0>0$.  To prove Theorem \ref{th:main} it suffices to study the differentiability of $P$ and show that $x_3=p_0(x_0)$.

We study $P$ with the aid of two other rectangles $S_1$ and $S_2$ in blow-up space.  For appropriate intervals $I_1$ and $I_2$ and a fixed $\theta_1$, $0<\theta_1<\frac{\pi}{2}$,
\begin{equation}
\label{Si}
S_i = \{(x,\theta,r) : x \in I_i, \, \theta=\theta_1, \, 0\le r <\epsilon_0\csc \theta_1\}, \quad i=1,\,2.
\end{equation}
We have $P=P_3 \circ P_2 \circ P_1$ where $P_i : S_{i-1} \to S_{i}$.   For $i=1,\,3$, $P_i$ is partly defined by following a singular solution rather than a solution.

To analyze these mappings we shall use affine coordinates instead of polar coordinates,  as is customary.

\subsection{Affine coordinates for $\bar z>0$}
 For $(x,(\bar z, \bar \epsilon), r) \in \mathbb{R} \times S^1 \times \mathbb{R}_+$ with $\bar z>0$, let
$E=\frac{\bar \epsilon}{\bar z}$, and in place of $r$ use $z=r\bar z$.  Thus we have
\begin{align*}
    x & = x, 
\\  z &= z, 
\\  \epsilon &= zE, 
\end{align*}
with $z\ge0$.  Note that $E=\tan\theta$.  After division by $z$ (equivalent to division by $r$ up to multiplication by a positive function), \eqref{eq3e}--\eqref{eq5e} becomes
\begin{align}
\dot x &= E, \label{r1} \\
\dot z &= \ha(x,z,zE)z, \label{r2} \\
\dot E &=-\ha(x,z,zE)E. \label{r3}
\end{align}

 Let $E_1=\tan\theta_1>0$.  In the affine coordinates, the rectangles $S_i$ corresponds to rectangles $S_i^a$ given by
\begin{align*}
S_i^a&= \{(x,z,E) : x \in I_i, \, z=z_0, \, 0\le E <\frac{\epsilon_0}{z_0}\}, \quad i=0,\,3; \\
S_i^a&= \{(x,z,E) : x \in I_i, \, E=E_1, \, 0\le z <\frac{\epsilon_0}{E_1}\}, \quad i=1,\,2. \\
\end{align*}
See Figure \ref{fig:flow-corner}.  Looking at the orbit that connects $(x,z,E)=(x_0,0,0)$ to $(x,z,E)=(x_3,0,0)$, we see  that $0=\int_{x_0}^{x_3}\frac{dE}{dx}\,dx=-\int_{x_0}^{x_3}\ho(x,0)\,dx$, so $x_3=p_0(x_0)$.  This immediately explains the entry-exit function.

\begin{figure}[htb]
\includegraph[width=3in]{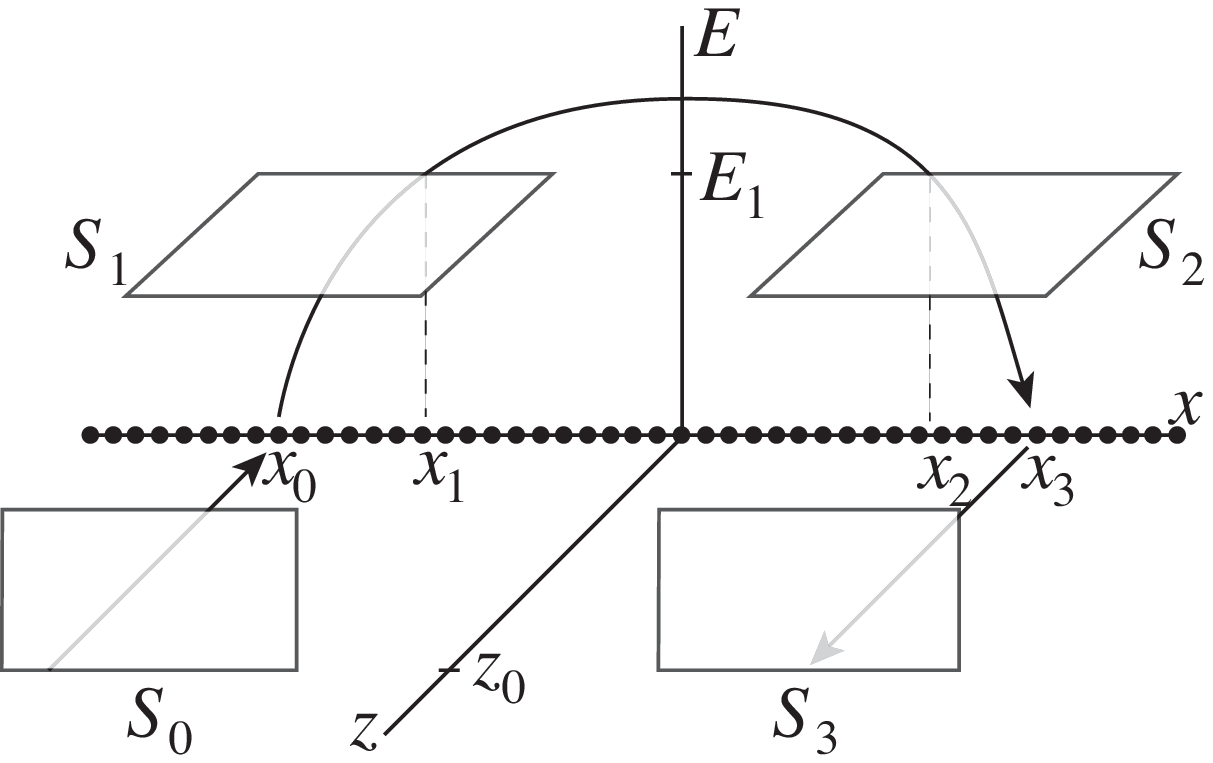}
\caption{Flow of \eqref{r1}--\eqref{r3}.}
\label{fig:flow-corner}
\end{figure}

For $i=1,2,3$, we define maps $P_i^a:S_{i-1}^a\to S_i^a$.  $P_2^a$ is defined by following solutions.  $P_1^a(x_0,z_0,E_0)$ is defined by following a solution if $E_0>0$, and by following a singular solution if $E_0=0$.  $P_2^a(x_2,z_2,E_1)$ is defined by following a solution if $z_2>0$, and by following a singular solution if $z_2=0$.

\subsection{Passage from $S_0^a$ to $S_1^a$}

We have $\ho(x_0^*,0)<0$.  To study \eqref{r1}--\eqref{r3} with $(x,z)$ near $(x_0^*,0)$, we divide by $-\h(x,z)$, let $\k(x,z)=-\frac{1}{\h(x,z)}>0$,  and obtain
\begin{align}
\dot x &= \ka(x,z,zE) E, \label{r1a} \\
\dot z &= -z, \label{r2a} \\
\dot E &=E. \label{r3a}
\end{align}
Note that $zE=\epsilon$ is constant on solutions.  If $\h(x,z)=\g(x,z)/\f(x,z)$ has property $\mathcal{F}$ in $z$, then $\k(x,z)$ has property $\mathcal{F}$ in $z$ (but $\ka(x,z,zE)$ may not).

We write  $P_1^a:S_0^a\to S_1^a$ as $P_1^a(x_0,z_0,E_0) = (x_1,z_1,E_1)$; the values of $z_0$ and $E_1$ are fixed in this formula.  For $E_0>0$ this mapping is obtained by following the solution of  \eqref{r1a}--\eqref{r3a} that starts at $(x_0,z_0,E_0)$ until it intersects the plane $E=E_1$ in a point $(x_1,z_1,E_1)$, with
$z_1=\frac{E_0}{E_1}z_0$ and $x_1=X_1(x_0,E_0)$.  For $E_0=0$ we define $z_1$ to be 0, and we define $x_1=X_1(x_0,0)$ by following the singular orbit; see Figure \ref{fig:flow-corner}.   From the normal hyperbolicity of $x$-axis away from $x=0$, it follows that the mapping $X_1(x_0,E_0)$ is continuous at $E_0=0$.

By looking at the unstable manifold of $(x_0,0,0)$ in $z=0$, we see that
\begin{equation}
\label{X1}
E_1=\int_{x_0}^{x_1}\frac{dE}{dx}\,dx = \int_{x_0}^{x_1}\frac{1}{\ko(x,0)}\,dx=- \int_{x_0}^{x_1}\ho(x,0)\,dx.
\end{equation}
This formula implicitly defines $x_1$ as a function of $x_0$, and hence implicitly defines $x_1=X_1(x_0,0)$.

\begin{prop}
\label{cornerprop1}
If $z_0>0$, $\epsilon_0>0$, and $E_1>0$ are sufficiently small, and $I_0$ is a sufficiently small neighborhood of $x_0^*$, then given $N>0$, there is a $C^N$ function $\tilde X_1$ of three variables such that
\begin{enumerate}
\item $X_1(x_0,E_0)=X_1(x_0,\frac{\epsilon}{z_0})=\tilde X_1(x_0,\epsilon,\epsilon\log \epsilon)$.
\item $X_1(x_0,0)=\tilde X_1(x_0,0,0)$ is given implicitly by \eqref{X1}.
\item If $\h(x,z)=\g(x,z)/\f(x,z)$ has property $\mathcal{F}$ in $z$, then $\tilde X_1(x,\epsilon,v) = o(v^N)$ as $v\to 0$.
\end{enumerate}
\end{prop}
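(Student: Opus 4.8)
The plan is to exploit the conserved quantity $zE=\epsilon$ to reduce the three‑dimensional transition problem to a scalar non‑autonomous equation, to identify the singular ($\epsilon=0$) transition with formula \eqref{X1}, and then to read off the smoothness statements from the local analysis of flow past a line of resonant saddles carried out in Section~\ref{sec:local}. First I would reduce to one dimension. Along every solution of \eqref{r1a}--\eqref{r3a} the product $zE$ is constant and equal to the parameter $\epsilon$; on $S_0^a$ we have $z=z_0$, so the relevant initial conditions satisfy $E_0=\epsilon/z_0$, and on $S_1^a$ we have $E=E_1$, so $z_1=\epsilon/E_1$. Using $E$ as independent variable (legitimate since $\dot E=E>0$ for $E>0$) and substituting $z=\epsilon/E$, the $x$‑component obeys
\[
\frac{dx}{dE}=k\left(x,\tfrac{\epsilon}{E},\epsilon\right),\qquad x(E_0)=x_0,\qquad E\in[E_0,E_1].
\]
As $E$ runs from $E_0=\epsilon/z_0$ to $E_1$, the quantity $z=\epsilon/E$ decreases from $z_0$ to $\epsilon/E_1$ and stays in $[0,z_0]$, where $k=-1/h>0$ is smooth; hence the solution exists and $X_1(x_0,\epsilon/z_0):=x(E_1)$ is well defined. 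When $\epsilon=0$ the equation collapses to $dx/dE=k(x,0,0)=-1/h(x,0,0)$, i.e.\ $dE/dx=-h(x,0,0)$; integrating from $x_0$ to $x_1$ with $E_0=0$ yields $E_1=-\int_{x_0}^{x_1}h(x,0,0)\,dx$, which is exactly \eqref{X1}. Thus conclusion~(2) follows from conclusion~(1), once the latter is established with a $\tilde X_1$ that is $C^N$ at the origin.

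The heart of the matter is conclusion~(1). The point $(x_0^*,0,0)$ lies on a line of equilibria $\{z=E=0\}$ of \eqref{r1a}--\eqref{r3a} whose linearization has eigenvalues $0$ (along $x$), $-1$ (along $z$) and $+1$ (along $E$); the two nonzero eigenvalues are in resonance of ratio $-1$, with resonant monomial $zE=\epsilon$, and $X_1$ is precisely the transition map from the section $z=z_0$ (transverse to the stable fibration) to the section $E=E_1$ (transverse to the unstable fibration). I would therefore invoke the local analysis of flow past such a line of resonant saddles developed in Section~\ref{sec:local}: after a $C^N$ change of coordinates near the line the flow is brought to a normal form up to flat terms, and the resulting transition map is $C^N$ in the three quantities $(x_0,\epsilon,\epsilon\log\epsilon)$, which is conclusion~(1). (Alternatively, one can establish the same estimates directly by controlling the solution of the scalar equation above and its derivatives through the variational equations, in the spirit of \cite{dem08}.) The mechanism producing the logarithm is already visible in the scalar equation: Taylor‑expanding $k(x,\epsilon/E,\epsilon)$ in its second slot, the term linear in $z=\epsilon/E$ contributes, roughly, $\int_{E_0}^{E_1}\partial_z k(x(E),0,\epsilon)\,\frac{\epsilon}{E}\,dE$, whose evaluation at the lower limit $E_0=\epsilon/z_0$ produces a term proportional to $\epsilon\log\epsilon$; every higher power $\epsilon^j/E^j$ with $j\ge2$ integrates to a quantity $O(\epsilon z_0^{j-1})$ that carries no logarithm, and the nonlinear dependence of $k$ on the (now $\epsilon\log\epsilon$‑laden) solution $x(E)$ is what bootstraps up the remaining powers of $\epsilon\log\epsilon$. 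It is exactly the resonance that obstructs removal of the linear‑in‑$z$ term by coordinate changes of finite smoothness.

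For conclusion~(3): if $h=g/f$ has property $\mathcal{F}$ in $z$, then so does $k=-1/h$, and therefore $\partial_z^j k(x,0,\epsilon)=0$ for every $j\ge1$. Since the coefficients of all the logarithmic (and iterated‑logarithmic) terms identified above are built polynomially from these derivatives, they all vanish. Concretely I would write $k(x,z,\epsilon)=k_0(x,\epsilon)+\phi(x,z,\epsilon)$ with $\phi$ flat in $z$: the $k_0$‑part of $X_1$ is governed by $dx/dE=k_0(x,\epsilon)$, an equation with no $z$‑dependence at all whose (autonomous) flow, evaluated between the smoothly varying endpoints $E_0=\epsilon/z_0$ and $E_1$, gives a function of $(x_0,\epsilon)$ smooth up to $\epsilon=0$; and the contribution of the flat remainder $\phi(x,\epsilon/E,\epsilon)$ is shown, again by the local analysis of Section~\ref{sec:local}, to be flat in $\epsilon\log\epsilon$ (in fact it depends smoothly on $(x_0,\epsilon)$ up to $\epsilon=0$). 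Hence $\tilde X_1$ may be chosen with flat dependence on its third argument, which is conclusion~(3).

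The main obstacle is the uniform $C^N$ control of $X_1$ as $\epsilon\to0$, i.e.\ as orbits pass arbitrarily close to the line of resonant saddles: the estimates there are tight precisely because of the resonance, which simultaneously forces the $\epsilon\log\epsilon$‑dependence and prevents its elimination by finitely smooth coordinate changes. This is the technical core, and it is exactly what is deferred to Section~\ref{sec:local}; the reduction to a scalar equation, the $\epsilon=0$ identification, and the bookkeeping around property $\mathcal{F}$ are comparatively routine.
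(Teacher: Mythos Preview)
Your reduction to the scalar equation $dx/dE=k(x,\epsilon/E,\epsilon)$ and your derivation of conclusion~(2) are correct and match the paper's brief argument preceding the proposition. For conclusions~(1) and~(3) you defer the hard work to Section~\ref{sec:local}, but since that section \emph{is} the paper's proof of this proposition, the relevant comparison is between the mechanism you sketch and what the paper actually does there. Your heuristic---that the term $\partial_zk(x,0,\epsilon)\cdot\epsilon/E$ in the Taylor expansion, integrated from $E_0=\epsilon/z_0$ to $E_1$, produces $\epsilon\log\epsilon$---correctly locates the source of the logarithm, and a direct expansion-plus-bootstrap along these lines (or via variational equations as in \cite{dem08}) could in principle be made to work. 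The paper, however, takes a different and more systematic route: it first straightens the $x$-flow on the invariant plane $\{z=0\}$ by the change $x=\alpha(\tilde x,\epsilon,E)$ built from the flow of $dx/dE=k(x,0,\epsilon)$, then by an inductive sequence of near-identity changes brings the system to the normal form $\dot{\bar x}=\epsilon\,a(\bar x,\epsilon)+\epsilon^{N+2}b(\bar x,z,E)$, and finally passes to the variables $\bar z=\epsilon\log z$, $\bar E=\epsilon\log E$ together with a rescaled time. In these variables the vector field is $C^N$ on a bounded domain and the passage from $S_0^a$ to $S_1^a$ takes time $\epsilon\log E_1-\epsilon\log\epsilon$; the $C^N$ dependence on $(x_0,\epsilon,\epsilon\log\epsilon)$ then follows directly from smooth dependence of flows on initial data, parameters, and time, with $\epsilon\log\epsilon$ entering through the initial value $\bar E_0=\epsilon\log\epsilon$ and the time of flight. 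This sidesteps the delicate uniform control of iterated derivatives that your bootstrap would require. For conclusion~(3), your split $k=k_0+\phi$ is in the same spirit as the paper's observation that property~$\mathcal F$ forces $a\equiv0$ (and $b$ flat in $z$) in the normal form, whence $\bar x$ changes by only $\mathcal O(\epsilon^{N+2}\log\epsilon)$ and all $v$-coefficients through degree $N$ vanish. One minor correction: no \emph{iterated} logarithms ever enter; only monomials in $\epsilon$ and $\epsilon\log\epsilon$ appear.
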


 The proof of conclusions (1) and (3) of this proposition will be postponed until Section \ref{sec:local}.  Since we have already shown that
$X_1(x_0,0)$ is given implicitly by \eqref{X1}, once we know (1), it follows that $\tilde X_1(x_0,0,0)$ is also given implicitly by \eqref{X1}.

\subsection{Passage from $S_1^a$ to $S_2^a$}

$P_2^a:S_1^a\to S_2^a$ is obtained by following the solution of  \eqref{r1}--\eqref{r3} that starts at $(x_1,z_1,E_1)$ until it reintersects the plane $E=E_1$ in a point $(x_2,z_2,E_1)$, with $z_2=z_1$ and $x_2=X_2(x_1,z_1)$.   Since $C^\infty$ vector fields have $C^\infty$ flows, we have immediately that $X_2$ is $C^\infty$.

The value of $x_2=X_2(x_1,0)$ is given implicitly by the formula
\begin{equation}
\label{X2}
0=\int_{x_1}^{x_2} \frac{dE}{dx}\,dx=-\int_{x_1}^{x_2} \ho(x,0)\,dx.
\end{equation}

Define $x_1^*$ by \eqref{X1} with $x_0=x_0^*$, i.e.,
$
\int_{x_0^*}^{x_1^*} \ho(x,0)\,dx=-E_1
$.

\begin{prop}
\label{familyprop}
For a given  $E_1>0$, if $I_1$ is a sufficiently small neighborhood of $x_1^*$ and $\epsilon_0>0$ is sufficiently small, the function $x_2=X_2(x_1,z_1)$ defined above is $C^\infty$,  and $X_2(x_1,0)$ is given implicitly by \eqref{X2}.
\end{prop}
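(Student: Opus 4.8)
The plan is to prove Proposition~\ref{familyprop} by regular perturbation off the ``middle'' piece of singular orbit that lies in the invariant plane $z=0$. The key preliminary observation is that $zE=\epsilon$ is a first integral of \eqref{r1}--\eqref{r3}: indeed $\frac{d}{dt}(zE)=h(x,z,zE)\,z\,E-z\,h(x,z,zE)\,E=0$. Along any orbit that stays close to the singular orbit, $\dot x=E$ is bounded above and below by positive constants, so one may use $x$ as the independent variable and substitute $z=\epsilon/E$, reducing \eqref{r1}--\eqref{r3} to the scalar equation
\begin{equation}
\frac{dE}{dx}=-h\!\left(x,\tfrac{\epsilon}{E},\epsilon\right),\qquad E(x_1)=E_1,
\label{reducedE}
\end{equation}
in which $\epsilon=z_1E_1$ enters only as a parameter. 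The right-hand side is $C^\infty$ wherever $E$ is bounded away from $0$ and $\epsilon/E$ stays in the domain of $h$. Note also that once the orbit returns to $\{E=E_1\}$, the first integral forces $z_2=\epsilon/E_1=z_1$, which is the relation $z_2=z_1$ asserted in the text.

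Next I treat the singular case $\epsilon=0$, where \eqref{reducedE} reads $\frac{dE}{dx}=-h_0(x,0)$, so $E(x)=E_1-\int_{x_1}^{x}h_0(s,0)\,ds$. Since $x_1^*<0$ (from \eqref{X1} for $E_1$ small), we have $h_0(x_1^*,0)<0$, so $E$ is increasing at $x=x_1^*$, attains its maximum at $x=0$, and is strictly decreasing for $x>0$ because $h_0(x,0)>0$ there. Consequently $E$ first returns to the value $E_1$ at the unique point $x_2=x_2^*>0$ determined by $\int_{x_1^*}^{x_2^*}h_0(s,0)\,ds=0$, which is precisely \eqref{X2}; in particular $X_2(x_1^*,0)=x_2^*$. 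Since $x_2^*>0$ we have $h_0(x_2^*,0)>0$, so $\frac{dE}{dx}\big|_{x=x_2^*}=-h_0(x_2^*,0)\neq 0$: the section $\{E=E_1\}$ is crossed transversally, and on the compact interval $[x_1^*,x_2^*]$ the singular solution keeps $E$ bounded away from $0$.

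Finally I invoke smooth dependence on initial data and parameters. Shrinking $I_1$ around $x_1^*$ and $\epsilon_0>0$ (and taking $I_2$ to be a neighborhood of $x_2^*$ large enough to contain the image), every solution of \eqref{reducedE} with $x_1\in I_1$ and $0\le\epsilon<\epsilon_0$ stays uniformly $C^0$-close on a compact $x$-interval to the singular solution of the previous paragraph; in particular $E>E_1/2$, so $z=\epsilon/E<2\epsilon_0/E_1$ remains in the domain of $h$, and the solution first returns to $\{E=E_1\}$ at a point $X_2(x_1,\epsilon/E_1)$. Applying the implicit function theorem to the equation ``$E(x_2;x_1,\epsilon)=E_1$'' (legitimate since $\partial_x E=-h\neq 0$ near the return point, by continuity from the singular case) shows this return point is a $C^\infty$ function of $(x_1,\epsilon)$; composing with the $C^\infty$ (indeed linear) substitution $\epsilon=z_1E_1$ gives that $X_2$ is $C^\infty$ in $(x_1,z_1)$, and at $z_1=0$ it is the point $x_2^*$ of \eqref{X2}. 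The one genuinely non-routine point is that $P_2^a$ is not a short-transit map: the orbit must be followed over an $O(1)$ interval of $x$, so the reduction $z=\epsilon/E$ and the lower bound on $E$ must be maintained along the whole orbit. This is exactly what the first integral $zE=\epsilon$ together with uniform closeness to the transversally-returning singular orbit on a compact $x$-interval provides; everything else is standard.
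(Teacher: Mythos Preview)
Your proof is correct and, in fact, more detailed than the paper's. The paper disposes of this proposition in a single line---``since $C^\infty$ vector fields have $C^\infty$ flows, we have immediately that $X_2$ is $C^\infty$''---leaving implicit the transversality of the return to $\{E=E_1\}$ and the fact that the orbit stays in a region where the system is smooth. Your argument supplies these details: the first integral $zE=\epsilon$ lets you reduce to a scalar equation in $x$, the explicit singular solution shows $E\ge E_1$ throughout (hence $z=\epsilon/E$ stays small and $\dot x=E$ stays positive), and transversality at the return point follows from $h_0(x_2^*,0)>0$. One small quibble: at the end you write ``at $z_1=0$ it is the point $x_2^*$ of \eqref{X2},'' but of course $X_2(x_1,0)$ depends on $x_1$; your argument already gives the full statement, namely $X_2(x_1,0)$ is determined by $\int_{x_1}^{X_2(x_1,0)} h_0(x,0)\,dx=0$ for each $x_1\in I_1$.
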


\subsection{Passage from $S_2^a$ to $S_3^a$}

Define $x_2^*$ implicitly by \eqref{X2} with $x_1=x_1^*$, i.e.,
$
 \int_{x_1^*}^{x_2^*} \ho(x,0)\,dx=0
 $.

We have $\ho(x_2^*,0)>0$.  To study \eqref{r1}--\eqref{r3} with $(x,z)$ near $(x_2^*,0)$, we divide by $\h(x,z)$, let $\k(x,z)=\frac{1}{\h(x,z)}>0$,  and obtain
\begin{align}
\dot x &= \ka(x,z,zE) E, \label{r1g} \\
\dot z &= z, \label{r2g} \\
\dot E &=-E. \label{r3g}
\end{align}

We write  $P_3^a:S_2^a\to S_3^a$ as $P_3^a(x_2,z_2,E_1) = (x_3,z_0,E_3)$; the values of $E_1$ and $z_0$ are fixed in this formula.  For $z_2>0$ this mapping is obtained by following the solution of  \eqref{r1g}--\eqref{r3g} that starts at $(x_2,z_2,E_1)$ until it intersects the plane $z=z_0$ in a point $(x_3,z_0,E_3)$, with
$E_3=\frac{E_1}{z_0}z_2$ and $x_3=X_3(x_2,z_2)$.   Note that
$$
E_3=\frac{E_1}{z_0}z_2=\frac{E_1}{z_0}z_1=\frac{E_1}{z_0}\frac{E_0}{E_1}z_0=E_0.
$$
For $z_2=0$ we define $E_3$ to be 0, and we define $x_3=X_3(x_2,0)$  by following the singular orbit; see Figure \ref{fig:flow-corner}.  By looking at the stable manifold of $(x_3,0,0)$ in $z=0$, we see that
\begin{equation}
\label{X3}
-E_1=\int_{x_2}^{x_3}\frac{dE}{dx}\,dx = -\int_{x_2}^{x_3}\frac{1}{\ko(x,0)}\,dx=- \int_{x_2}^{x_3}\ho(x,0)\,dx.
\end{equation}
This formula implicitly defines $x_3$ as a function of $x_2$, and hence implicitly defines $x_3=X_3(x_2,0)$.

The proof of Proposition~\ref{cornerprop1} that we will present in Section~\ref{sec:local} is also valid for proving the following proposition:

\begin{prop}
\label{cornerprop2}
If $z_0>0$, $\epsilon_0>0$, and $E_1>0$ are sufficiently small, and $I_2$ is a sufficiently small neighborhood of  $x_2^*$, then given $N>0$, there is a $C^N$ function $\tilde X_3$ of three variables such that
\begin{enumerate}
\item $X_3(x_2,z_2)=X_3(x_2,\frac{\epsilon}{E_1})=\tilde X_3(x_2,\epsilon,\epsilon \log \epsilon)$.
\item $X_3(x_2,0)=\tilde X_3(x_2,0,0)$ is given implicitly by \eqref{X3}.
\item If $\h(x,z)=\g(x,z)/\f(x,z)$ has property $\mathcal{F}$ in $z$, then $\tilde X_3(x,\epsilon,v) = o(v^N)$ as $v\to 0$.
\end{enumerate}
\end{prop}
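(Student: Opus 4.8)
The plan is to obtain Proposition~\ref{cornerprop2} from the analysis of the flow past a line of resonant saddles to be carried out in Section~\ref{sec:local} for Proposition~\ref{cornerprop1}, by observing that the repelling corner near $x_2^*$ is, under the involution $z\leftrightarrow E$, the attracting corner near $x_0^*$. Interchanging $z$ and $E$ turns the hyperbolic part $\dot z=z$, $\dot E=-E$ of \eqref{r1g}--\eqref{r3g} into the hyperbolic part $\dot z=-z$, $\dot E=E$ of \eqref{r1a}--\eqref{r3a}; it fixes the invariant set $\{zE=\epsilon\}$, which is symmetric in $z$ and $E$; and it carries the entry section $S_2^a$ and the exit section $S_3^a$ to sections of the same shape as $S_0^a$ and $S_1^a$. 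The coefficient $\ka(x,z,zE)=1/\ha(x,z,zE)$ in the $x$-equation \eqref{r1g} is smooth and nowhere vanishing on a neighbourhood of $x_2^*$ (because $\ho(x_2^*,0)>0$), exactly as the coefficient $\ka(x,z,zE)=-1/\ha(x,z,zE)$ in \eqref{r1a} is near $x_0^*$; since the transit estimate of Section~\ref{sec:local} is set up for the general resonant-saddle model and depends on the $x$-equation only through this nowhere-vanishing property, not through the sign of the coefficient, the proof given there applies verbatim to \eqref{r1g}--\eqref{r3g} on $S_2^a$, $S_3^a$ --- as the sentence preceding Proposition~\ref{cornerprop2} asserts.

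Granting this, the three conclusions of Proposition~\ref{cornerprop2} are read off just as the corresponding conclusions of Proposition~\ref{cornerprop1} are. The $C^N$ representation furnished by Section~\ref{sec:local} gives a $C^N$ function $\tilde X_3$ of three variables with $X_3(x_2,z_2)=\tilde X_3(x_2,\epsilon,\epsilon\log\epsilon)$, where $z_2=\epsilon/E_1$, which is conclusion (1). Conclusion (2) then follows just as for $X_1$ in Proposition~\ref{cornerprop1}: \eqref{X3} has already been derived in the text from the stable manifold of $(x_3,0,0)$ inside $z=0$, so once (1) is known, $\tilde X_3(x_2,0,0)=X_3(x_2,0)$ is the function defined implicitly by \eqref{X3}. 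Along the singular orbit one has $dE/dx=-\ho(x,0)$ in both corners; the sign that distinguishes \eqref{X3} from \eqref{X1} comes solely from the fact that for the repelling corner the fixed value $E=E_1$ sits on the entry section $S_2^a$, rather than on the exit section as for $P_1^a$, so that $E$ runs from $E_1$ to $0$ instead of from $0$ to $E_1$. For conclusion (3): near $x_2^*$ the function $\h(x,z)$ is bounded away from $0$, so if $\h(x,z)=\g(x,z)/\f(x,z)$ has property $\mathcal{F}$ in $z$ then so does $\k(x,z)=1/\h(x,z)$; this is precisely the form of the hypothesis under which the Section~\ref{sec:local} argument produces the flatness estimate $\tilde X_1(x,\epsilon,v)=o(v^N)$, and the same argument then yields $\tilde X_3(x,\epsilon,v)=o(v^N)$.

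The main obstacle does not lie in this proposition at all: the genuine difficulty --- the transit estimate past a line of resonant saddles, where the resonance forces the $\epsilon\log\epsilon$-dependence and governs the flatness under property $\mathcal{F}$ --- is entirely in Section~\ref{sec:local} and is shared with Proposition~\ref{cornerprop1}. The only work specific to Proposition~\ref{cornerprop2} is the bookkeeping described above: keeping track of which coordinate on each section is ``fixed'' and which is ``small'' after the interchange $z\leftrightarrow E$, checking that the conserved quantity $zE=\epsilon$ and the normal-form structure of Section~\ref{sec:local} survive it, and verifying that the hypothesis ``$\h(x,z)$ has property $\mathcal{F}$ in $z$'' is transported to the correct slot. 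If one prefers instead to realise the symmetry by time reversal --- running the backward flow of \eqref{r1g}--\eqref{r3g} from $S_3^a$ to $S_2^a$, which makes the two corner passages coincide up to the sign of the coefficient --- then one routine extra step is needed: an application of the implicit function theorem, legitimate because the relevant $x$-derivative is nonzero (the flow being a diffeomorphism), to recover $X_3$ from the backward transition map; both conclusion (1) and the flatness of conclusion (3) pass through this inversion.
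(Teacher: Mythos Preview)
Your proposal is correct and matches the paper's own treatment: the paper simply states that ``the proof of Proposition~\ref{cornerprop1} that we will present in Section~\ref{sec:local} is also valid for proving'' Proposition~\ref{cornerprop2}, and at the start of Section~\ref{sec:local} repeats that ``the proof of Proposition~\ref{cornerprop2} is similar''; you have spelled out the symmetry that the paper leaves implicit. One small refinement: the involution $z\leftrightarrow E$ does swap the hyperbolic parts and the sections as you say, but it sends the $x$-equation $\dot x=k(x,z,zE)E$ to $\dot x=k(x,E,zE)z$, which is not literally of the form \eqref{r1a}; the cleanest realisation of the symmetry is therefore the time-reversal you mention at the end (which gives exactly \eqref{r1a}--\eqref{r3a} with $k$ replaced by $-k$, and the sign of $k$ is nowhere used in Section~\ref{sec:local}), or else a direct rerun of Section~\ref{sec:local} on \eqref{r1g}--\eqref{r3g} with the obvious sign changes, which also preserves the property-$\mathcal{F}$ hypothesis in the correct slot.
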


\subsection{Proof of Theorem \ref{th:main}}

To prove Theorem \ref{th:main}, we define $P^a:S_0\to S_3$ by $P^a=P_3^a\circ P^a=P_3^a\circ P_2^a\circ P_1^a$.
Then $P^a(x_0,z_0,E_0)=(p^a(x_0,E_0),z_0,E_0)$ where $p(x_0,\epsilon)=p^a(x_0,\frac{\epsilon}{z_0})$.

From Propositions \ref{cornerprop1}, \ref{familyprop}, and \ref{cornerprop2}, we see that given $N>0$, there is a $C^N$ function $\tilde p_N$ of three variables such that $p(x_0,\epsilon)=\tilde p_N(x_0,\epsilon,\epsilon\log \epsilon)$.  To see that $p(x_0,0)=p_0(x_0)$, we note that $p(x_0,0)=x_3$ where by \eqref{X1}, \eqref{X2}, and \eqref{X3} we have
\begin{align*}
\int_{x_0}^{x_3}\ho(x,0)\,dx &=
\int_{x_0}^{x_1}\ho(x,0)\,dx +\int_{x_1}^{x_2}\ho(x,0)\,dx +\int_{x_2}^{x_3}\ho(x,0)\,dx \\&=-E_1+0+E_1=0.
\end{align*}

The sequence $1, \epsilon\log \epsilon,\epsilon, (\epsilon\log \epsilon)^2,\epsilon^2, \ldots$ is an {\em asymptotic scale} at $\epsilon=0$: each term divided by the previous term approaches 0 as $\epsilon$ approaches 0.  It follows that the asymptotic expansion of $p_N$ in terms of $(\epsilon,\epsilon\log\epsilon)$ coincides with the expansion of $p_{M}$ up to order $\min(N,M)$.  Hence there is a unique power series $\check{p}(x_0,\epsilon,v)$ in $(\epsilon,v)$, with coefficients that are smooth functions of  $x_0$, that is the asymptotic expansion of $p(x_0,\epsilon)$  in terms of $(\epsilon,\epsilon\log\epsilon)$. We can use Borel's theorem to realize $\check{p}(x_0,\epsilon,v)$ as a smooth function $\bar p(x_0,\epsilon,v)$.  We readily see that
\[
    p(x_0,\epsilon)-\bar p(x_0,\epsilon,\epsilon\log\epsilon)
\]
is $\mathcal{O}(\epsilon^N)$-flat for all $N$, uniformly in $x_0$ on compact sets, along with all its derivatives.  It follows that $p-\bar p$ is $C^\infty$  in $(x_0,\epsilon)$.  Thus we have written $p(x_0,\epsilon)$ as a $C^\infty$ function of $(x_0,\epsilon,\epsilon\log\epsilon)$.

If $\g(x,z)/\f(x,z)$ has property $\mathcal{F}$ in $z$, we see from conclusion (3) of Proposition \ref{cornerprop1} that $\check{p}(x_0,\epsilon,v)$ does not depend on $v$ at all.
Therefore $\bar p$ can be taken to be a $C^\infty$ function of $(x_0,\epsilon)$ only.  Hence in this case $p$ is a
$C^{\infty}$ function of $(x_0,\epsilon)$.

\section{Passage by the line of saddles}\label{sec:local}

In this Section we prove  conclusions (1) and (3) of Proposition \ref{cornerprop1}; the proof of Proposition \ref{cornerprop2} is similar.  We  shall  express $X_1(x_0,E_0)$ with $E_0>0$ in the form $X_1(x_0,E_0)=\tilde X_1(x_0,\epsilon,\epsilon\log\epsilon)$, where $\tilde X_1$ extends smoothly to $\tilde X_1(x_0,0,0)$.

\subsection{Normal form}\label{sec:nf}

To simplify \eqref{r1a}--\eqref{r3a}, we first consider the autonomous differential equation
$$
\frac{dx}{dE}=k(x,0,\epsilon),
$$
in which $E$ plays the role of time and $\epsilon$ is a parameter.  We denote the flow by $x=\alpha(\tilde x,\epsilon,E)$, i.e., $\alpha(\tilde x,\epsilon,E)$ satisfies
\begin{equation}
\label{alpha}
\alpha_E(\tilde x,\epsilon,E)=k(\alpha(\tilde x,\epsilon,E),0,\epsilon),\quad \alpha(\tilde x,\epsilon,0)=\tilde x.
\end{equation}
Since $k$ is $C^\infty$, $\alpha$ is $C^\infty$.

\begin{prop}
\label{prop:normalform1}
If in \eqref{r1a}--\eqref{r3a} one makes the change of variables $x=\alpha(\tilde x,\epsilon,E)$, with $\epsilon=zE$.  Then
\begin{equation}
\dot{\tilde x}=zE\tilde k(\tilde x, z, E) = \epsilon \tilde k(\tilde x, z, E)
\label{xtildedot2}
\end{equation}
with $\tilde k$ of class $C^\infty$. Moreover, if $k$ has property $\mathcal{F}$ in $z$, then $\tilde k$ is flat is $z$; in particular,  $\tilde k(\tilde x, 0, E)=0$.
\end{prop}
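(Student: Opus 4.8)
The plan is to read off the $\tilde x$-equation by differentiating $x=\alpha(\tilde x,\epsilon,E)$ along solutions of \eqref{r1a}--\eqref{r3a}, using that $\epsilon=zE$ is conserved by that system (indeed $\frac{d}{dt}(zE)=-zE+zE=0$). Since $\alpha(\cdot,\epsilon,E)$ is the time-$E$ flow of $dx/dE=k(x,0,\epsilon)$, the substitution $(x,z,E)\mapsto(\tilde x,z,E)$ is a $C^\infty$ diffeomorphism on the relevant domain, with inverse $\tilde x=\alpha(x,\epsilon,-E)$, and $\alpha_{\tilde x}>0$ because it solves the variational equation $\partial_E\alpha_{\tilde x}=k_x(\alpha,0,\epsilon)\,\alpha_{\tilde x}$, $\alpha_{\tilde x}|_{E=0}=1$.

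Concretely, I would differentiate, using $\dot\epsilon=0$, $\dot E=E$, and \eqref{alpha}:
\[
\dot x=\alpha_{\tilde x}\,\dot{\tilde x}+\alpha_E\,\dot E=\alpha_{\tilde x}\,\dot{\tilde x}+E\,k(\alpha,0,\epsilon),
\]
whereas \eqref{r1a} gives $\dot x=E\,k(x,z,\epsilon)=E\,k(\alpha,z,\epsilon)$; equating,
\[
\dot{\tilde x}=\frac{E\bigl(k(\alpha,z,\epsilon)-k(\alpha,0,\epsilon)\bigr)}{\alpha_{\tilde x}}.
\]
By Hadamard's lemma, $k(x,z,\epsilon)-k(x,0,\epsilon)=z\,\ell(x,z,\epsilon)$ with $\ell(x,z,\epsilon)=\int_0^1 k_z(x,sz,\epsilon)\,ds$ of class $C^\infty$. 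Substituting and recalling $zE=\epsilon$, this becomes $\dot{\tilde x}=zE\,\tilde k(\tilde x,z,E)=\epsilon\,\tilde k(\tilde x,z,E)$ with
\[
\tilde k(\tilde x,z,E)=\frac{\ell\bigl(\alpha(\tilde x,zE,E),\,z,\,zE\bigr)}{\alpha_{\tilde x}(\tilde x,zE,E)}.
\]
Since $\ell$, $\alpha$, $\alpha_{\tilde x}$ are $C^\infty$ and $\alpha_{\tilde x}>0$, the function $\tilde k$ is $C^\infty$; this is the first assertion.

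For the flatness statement, assume $k$ has property $\mathcal{F}$ in $z$, so $D(x,z,\epsilon):=k(x,z,\epsilon)-k(x,0,\epsilon)$ is flat in $z$, and hence so is $D_z=k_z(x,z,\epsilon)$. Differentiating $\ell=\int_0^1 D_z(x,sz,\epsilon)\,ds$ under the integral sign shows $\ell$ is flat in $z$ as well; in particular $\ell$ vanishes identically on $\{z=0\}$, so all of its $x$- and $\epsilon$-derivatives vanish there too, and therefore \emph{every} partial derivative of $\ell$, of every order, vanishes on $\{z=0\}$. In $\tilde k$ the middle argument of $\ell$ equals $z$, while the other two arguments, $\alpha(\tilde x,zE,E)$ and $zE$, are smooth functions of $z$ equal to $\alpha(\tilde x,0,E)$ and $0$ at $z=0$; so by Fa\`{a} di Bruno every $z$-derivative at $z=0$ of $\ell\bigl(\alpha(\tilde x,zE,E),z,zE\bigr)$ is a sum of products of partial derivatives of $\ell$ evaluated at a point whose $z$-coordinate is $0$, hence vanishes. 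Dividing by the smooth, nonvanishing $\alpha_{\tilde x}$ preserves this, so $\tilde k$ is flat in $z$; in particular $\tilde k(\tilde x,0,E)=0$.

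The computation itself is routine; the one place I expect to need care is this last composition step — verifying that the spurious $z$-dependence entering through $\epsilon=zE$ (and through $\alpha$) does not destroy flatness. It works precisely because flatness of $\ell$ in its middle slot forces all partials of $\ell$ to vanish on $\{z=0\}$, which is exactly what a Fa\`{a} di Bruno expansion needs.
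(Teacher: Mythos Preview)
Your proof is correct and follows essentially the same route as the paper: both differentiate $x=\alpha(\tilde x,\epsilon,E)$ along solutions, use \eqref{alpha} to recognize the numerator as $k(\alpha,z,\epsilon)-k(\alpha,0,\epsilon)$, and invoke nonvanishing of $\alpha_{\tilde x}$ via the variational equation. The only difference is one of explicitness: where the paper factors out $z$ by observing the numerator vanishes at $z=0$ and then says ``one easily checks'' that property~$\mathcal{F}$ survives the substitution $\epsilon=zE$, you spell out the Hadamard quotient $\ell$ and the Fa\`a di Bruno reasoning showing that flatness of $\ell$ in its middle slot forces all partials to vanish on $\{z=0\}$, hence the composite $\ell(\alpha(\tilde x,zE,E),z,zE)$ remains flat in $z$.
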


Note that \eqref{r1a}--\eqref{r3a} has  $\dot x=0$ on the invariant plane $E=0$,but not on the invariant plane $z=0$.  On the other hand, the system \eqref{xtildedot2}, \eqref{r2a}--\eqref{r3a} has  $\dot{\tilde x}=0$ on both invariant planes.  This alone is easy to accomplish; however, if $k$ has property $\mathcal{F}$ in $z$ and nontrivial dependence on $\epsilon$, a careful change of coordinates is needed to yield a $\tilde k$ that is flat is $z$.

\begin{proof}
The equation $\dot x=\alpha_{\tilde x}\dot{\tilde x}+\alpha_E\dot E$ yields
\begin{equation}
\dot{\tilde x}=E\frac{k(\alpha(\tilde x,\epsilon,E),z,\epsilon)-\alpha_E(\tilde x,\epsilon,E)}{\alpha_{\tilde x}}, \quad\epsilon=zE.
\label{xtildedot}
\end{equation}
Now $\alpha_{\tilde x}(\tilde x, \epsilon, E)$ solves the linear initial value problem
$$
u_E(\tilde x, \epsilon, E)=k_x(\alpha(\tilde x,\epsilon,E),0,\epsilon)u, \quad u(\tilde x,\epsilon,0)=1.
$$
Therefore $\alpha_{\tilde x}(\tilde x, \epsilon,E)$, the denominator of the fraction in \eqref{xtildedot}, is nonzero for all $(\tilde x, \epsilon,E)$.  For $z=0$, the numerator of the fraction in \eqref{xtildedot} is $k(\alpha(\tilde x,0,E),0,0)-\alpha_E(\tilde x,0,E)$, which equals 0 by \eqref{alpha}.  Hence \eqref{xtildedot} can be rewritten as \eqref{xtildedot2}.

From \eqref{alpha} we have that $k(\alpha(\tilde x,\epsilon,E),0,\epsilon)-\alpha_E(\tilde x,\epsilon,E)\equiv 0$.  Using the equation one easily checks that if $k$ has property $\mathcal{F}$ in $z$, then the numerator of the fraction in \eqref{xtildedot}, with $\epsilon=zE$, is flat in $z$.  Therefore $\tilde k$ is flat is $z$.
\end{proof}

\begin{prop}
\label{prop:normalform}
Let $N\ge 1$.  Then (1) a $C^\infty$ coordinate change $\bar x=\eta_N(x,z,E)$ brings the system \eqref{r1a}--\eqref{r3a} in the form
\begin{align}
\dot{\bar x} &= \epsilon a(\bar x,\epsilon)+\epsilon^N b(\bar x,z,E), \label{r1b} \\
\dot z &= -z, \label{r2b} \\
\dot E &=E, \label{r3b}
\end{align}
with $a$ and $b$ of class $C^\infty$.
Moreover, (2) if $k$ in \eqref{r1a} has property  $\mathcal{F}$ in $z$, then $a=0$ and $b$ is flat in $z$.
\end{prop}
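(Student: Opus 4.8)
The plan is to prove both assertions at once by induction on $N$. For $N=1$ there is nothing to do: Proposition~\ref{prop:normalform1} brings \eqref{r1a}--\eqref{r3a}, via $x=\alpha(\tilde x,\epsilon,E)$, to $\dot{\tilde x}=\epsilon\tilde k(\tilde x,z,E)$, $\dot z=-z$, $\dot E=E$, so \eqref{r1b}--\eqref{r3b} holds with $a=0$ and $b=\tilde k$; and if $k$ has property~$\mathcal F$ in $z$ then $\tilde k$ is flat in $z$, giving $a=0$ and $b$ flat in $z$.

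For the inductive step, assume \eqref{r1a}--\eqref{r3a} has already been put, by a $C^\infty$ change of the $x$-variable (renamed $\bar x$), in the form \eqref{r1b}--\eqref{r3b} with $\epsilon^N b$, where $a$ and $b$ are $C^\infty$ on a neighborhood of the relevant arcs of $\{z=0\}\cup\{E=0\}$. The crucial observation is that $\epsilon=zE$ is constant along orbits and that the operator $\mathcal L w:=-zw_z+Ew_E$, the derivative of $w$ along the $(z,E)$-flow, annihilates $zE$ and hence $\epsilon^N$. Therefore the change $y=\bar x+\epsilon^N w(\bar x,z,E)$ satisfies $\dot y=\dot{\bar x}\,(1+\epsilon^N w_{\bar x})+\epsilon^N\mathcal L w$, that is,
\[
\dot y=\epsilon a(\bar x,\epsilon)+\epsilon^N\bigl(b+\mathcal L w\bigr)+\epsilon^{N+1}\bigl(a\,w_{\bar x}+\epsilon^{N-1}b\,w_{\bar x}\bigr).
\]
So it suffices to choose $w$ so that $b+\mathcal L w$ equals a function of $(\bar x,\epsilon)$ alone modulo $\epsilon\cdot C^\infty$.

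I would take $w(\bar x,z,E)=w_0(\bar x,E)+w_1(\bar x,z)$, where $w_0$ and $w_1$ solve the linear equations
\[
E\,\partial_E w_0=b(\bar x,0,0)-b(\bar x,0,E),\qquad -z\,\partial_z w_1=b(\bar x,0,0)-b(\bar x,z,0).
\]
The right-hand sides vanish at $E=0$ and $z=0$ respectively, so Hadamard's lemma gives $C^\infty$ solutions $w_0,w_1$ vanishing there, and $w$ is then $C^\infty$. A short computation shows that $b+\mathcal L w$, restricted to $\{z=0\}$ and to $\{E=0\}$, equals the value $b(\bar x,0,0)$ in both cases; since a $C^\infty$ function vanishing on $\{z=0\}\cup\{E=0\}$ is divisible by $zE$, we obtain $b+\mathcal L w=b(\bar x,0,0)+\epsilon\rho(\bar x,z,E)$ with $\rho\in C^\infty$. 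Substituting, and then expressing the old $\bar x$ through the $C^\infty$ inverse $\bar x=y+\epsilon^N(\ldots)$ and Taylor-expanding the $(\bar x,\epsilon)$-dependent term to absorb the discrepancy into the $\epsilon^{N+1}$-remainder, yields \eqref{r1b}--\eqref{r3b} with $\epsilon^{N+1}$ and with $a$ replaced by $a+\epsilon^{N-1}b(\bar x,0,0)$ — still a polynomial in $\epsilon$ with $C^\infty$ coefficients in $\bar x$, so no appeal to Borel's theorem is needed here. The surviving term $\epsilon a(\bar x,\epsilon)$ is precisely the resonant part of the line of saddles. Composing all these changes gives $\eta_N$, which proves (1).

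For (2), carry along the extra hypothesis ``$a=0$ and $b$ flat in $z$''. Then $b(\bar x,0,0)=0$, so the new $a$ is again $0$; moreover $w_0\equiv 0$, while $w_1(\bar x,z)=\int_0^z b(\bar x,t,0)/t\,dt$ is flat in $z$ because its integrand is, so $w$ is flat in $z$, hence so is $\mathcal L w$, hence so is $\rho=(b+\mathcal L w)/(zE)$ (dividing a function flat in $z$ by $z$ and then by $E$ preserves flatness in $z$), and hence so is the new $b$. The routine points are solving the two ODEs and checking that $y=\bar x+\epsilon^N w$ is a diffeomorphism for $\epsilon_0$ small, after shrinking the neighborhood finitely often. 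The step needing care is the bookkeeping after returning to the new variable — verifying that what we keep is genuinely a function of $(y,\epsilon)$ up to an $\epsilon^{N+1}$-error — and, for (2), that the successive divisions by $z$ and $E$ really preserve flatness in $z$.
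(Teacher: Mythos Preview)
Your argument is correct and follows essentially the same route as the paper's proof: both proceed by induction on $N$, taking Proposition~\ref{prop:normalform1} as the base case, and at each step apply a near-identity change of the $x$-variable of the form $\bar x\mapsto \bar x+\epsilon^{N}\bigl(\beta(\bar x,z)+\gamma(\bar x,E)\bigr)$, with $\beta$ and $\gamma$ determined by one-variable integrations that kill the $z$-only and $E$-only parts of the remainder. Your $w_1$ and $w_0$ coincide with the paper's $\beta$ and $\gamma$; your use of the operator $\mathcal L=-z\partial_z+E\partial_E$ and divisibility by $zE$ is just a repackaging of the paper's explicit decomposition $q=q_0+q_1+q_2+\mathcal O(\epsilon)$. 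The one organizational difference is that you carry the hypothesis ``$a=0$ and $b$ flat in $z$'' through the same induction, whereas the paper runs a second, cleaner induction for part~(2), noting that in the flat case $q_0=q_2=0$ so only the $\beta$-change is needed; your observation that $w_0\equiv 0$ in that case recovers exactly this simplification.
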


\begin{proof}
We first prove (1) by induction on $N$.  The case $N=1$ is just Proposition \ref{prop:normalform1}, with $a=0$ and $b=\tilde k$.  Now suppose $N\ge2$ and we have a system of the form
\begin{align}
\dot{y} &= \epsilon p(y,\epsilon)+\epsilon^{N-1} q(y,z,E), \label{r1c} \\
\dot z &= -z, \label{r2c} \\
\dot E &=E, \label{r3c}
\end{align}
with $p$ and $q$ of class $C^\infty$.   We decompose $q$ as
\begin{equation}
\label{q}
q(y,z,E)=q_0(y)+q_1(y,z)+q_2(y,E)+\mathcal{O}(\epsilon),
\end{equation}
with $q_1=\mathcal{O}(z)$ and $q_2=\mathcal{O}(E)$.  Now write
\begin{equation}
\label{xyalphabeta}
\bar x = y+\epsilon^{N-1}(\beta(y,z)+\gamma(y,E)),
\end{equation}
where $\beta$ and $\gamma$ are yet to be specified.  Then
\begin{align}
\dot{\bar x}&=\dot{y}+\epsilon^{N-1}(-\beta_z(y,z)z+\gamma_E(y,E)E)+\mathcal{O}(\epsilon^N) \label{dotbarx1}  \\
&=\dot{y}+\epsilon^{N-1}(-\beta_z(\bar x,z)z+\gamma_E(\bar x,E)E)+\mathcal{O}(\epsilon^N) \label{dotbarx2}
\end{align}
because $y=\bar x + \mathcal{O}(\epsilon)$.  On the other hand, from \eqref{r1c}, \eqref{xyalphabeta}, and \eqref{q},
\begin{align}
\dot y &=\epsilon p(\bar x,\epsilon)+\epsilon^{N-1} q(\bar x,z,E)+\mathcal{O}(\epsilon^N) \label{doty1} \\
&=\epsilon p(\bar x,\epsilon)+\epsilon^{N-1} (q_0(\bar x)+q_1(\bar x,z)+q_2(\bar x,E))+\mathcal{O}(\epsilon^N).  \label{doty2}
\end{align}
Substituting \eqref{doty2} into \eqref{dotbarx2}, we obtain
$$
\dot{\bar x}=(\epsilon p + \epsilon^{N-1}q_0)+\epsilon^{N-1} (q_1-z\beta_z) +\epsilon^{N-1} (q_2+E\gamma_E)+\mathcal{O}(\epsilon^N).
$$
Choosing $\beta=\int\frac{q_1}{z}\,dz$ and $\gamma=-\int\frac{q_2}{E}\,dE$, we obtain \eqref{r1b}.

To prove (2), we assume $k$ in \eqref{r1a} has property  $\mathcal{F}$ in $z$, and show by induction on $N$ that there is a $C^\infty$ coordinate change $\bar x=\eta_N(x,z,E)$ that converts  \eqref{r1a}--\eqref{r3a} to
\begin{align}
\dot{\bar x} &= \epsilon^N b(\bar x,z,E), \label{r1bf} \\
\dot z &= -z, \label{r2bf} \\
\dot E &=E, \label{r3bf}
\end{align}
where $b$ is $C^\infty$ and flat in $z$.  The case $N=1$ is given by the comment after formula \eqref{xtildedot2}, with $b=\tilde k$.  Now suppose $N\ge2$ and we have a system of the form
\begin{align}
\dot{y} &=\epsilon^{N-1} q(y,z,E), \label{r1cf} \\
\dot z &= -z, \label{r2cf} \\
\dot E &=E, \label{r3cf}
\end{align}
where $q$ is $C^\infty$ and flat in $z$.   In this case we can write
$$
q(y,z,E)=q_1(y,z)+\epsilon q_3(y,z,\epsilon),
$$
with $q_1$ and $q_3$ flat in $z$.  Now write
\begin{equation}
\label{xyalphabetaf}
\bar x = y+\epsilon^{N-1}\beta(y,z),
\end{equation}
with $\beta$ not yet specified but flat in $z$.  Then
\begin{align}
\dot{\bar x}&=\dot{y}-\epsilon^{N-1}\beta_z(y,z)z+\epsilon^Nq_4(y,z,E)
=\dot{y}-\epsilon^{N-1}\beta_z(\bar x,z)z+\epsilon^Nq_5(\bar x,z,E), \label{dotbarx2f}
\end{align}
with $q_4$ and $q_5$ flat in $z$.  On the other hand, from \eqref{r1cf}, \eqref{xyalphabetaf}, and \eqref{q},
\begin{align}
\dot y &=\epsilon^{N-1} q(\bar x,z,E)+\epsilon^Nq_6(\bar x,z,E)
=\epsilon^{N-1}q_1(\bar x,z)+\epsilon^Nq_7(\bar x,z,E),  \label{doty2f}
\end{align}
with $q_6$ and $q_7$ flat in $z$. Substituting \eqref{doty2f} into \eqref{dotbarx2f}, we obtain
$$
\dot{\bar x}=\epsilon^{N-1} (q_1-z\beta_z) +\epsilon^Nq_8(\bar x,z,E),
$$
with $q_8$ flat in $z$. Choosing $\beta=\int\frac{q_1}{z}\,dz$, which is flat in $z$ as required, we obtain \eqref{r1bf}.
\end{proof}

\subsection{Integration}

For simplicity we take $z_0=1$.  Thus to define $P_1^a: S_0^a \to S_1^a$ for $\epsilon>0$, we wish to integrate \eqref{r1a}--\eqref{r3a} from an initial point $(x,z,E)=(x_0,1,\epsilon)$ until $E=E_1$, i.e., to a final point $(x_1,\frac{\epsilon}{E_1},E_1)$.

We consider \eqref{r1b}--\eqref{r3b} with $N$ replaced by $N+2$, $N\ge 1$:
\begin{align}
\dot{\bar x} &= \epsilon a(\bar x,\epsilon)+\epsilon^{N+2} b(\bar x,z,E), \label{r1h} \\
\dot z &= -z, \label{r2h} \\
\dot E &=E. \label{r3h}
\end{align}
In these coordinates, the initial and final points are given by $(\bar x_0,1,\epsilon)$ and $(\bar x_1,\frac{\epsilon}{E_1},E_1)$, where $\bar x_i$ depends smoothly on $(x_i,\epsilon)$, as shown in Proposition~\ref{prop:normalform}.

In \eqref{r1h}--\eqref{r3h} we  make the additional change of variables $\bar z=zE\log z=\epsilon\log z$ and $\bar E=zE\log E=\epsilon\log E$.  In addition, we use $t$ to denote time in \eqref{r1a}--\eqref{r3a} or \eqref{r1h}--\eqref{r3h}, we introduce the new time $\tau=\frac{t}{\epsilon}$, and we use prime to denote derivative with respect to $\tau$. We obtain
\begin{align}
\bar x^\prime &= a(\bar x,\epsilon)+\epsilon^{N+1} b(\bar x,e^{\bar z/\epsilon},e^{\bar E/\epsilon}), \label{r1d} \\
\bar z^\prime &= -1, \label{r2d} \\
\bar E^\prime &=1. \label{r3d}
\end{align}
We need to integrate this system from $(\bar x,\bar z,\bar E)=(\bar x_0,0,\bar E_0)$, with $\bar E_0=\epsilon \log \epsilon$, to $(\bar x,\bar z,\bar E)=(\bar x_1,\bar z_1,\bar E_1)$, with $\bar z_1=\epsilon \log \frac{\epsilon}{E_1}$ and $\bar E_1=\epsilon \log E_1$.  See Figure \ref{fig:domain}.
The time of integration is $\tau=\epsilon \log E_1-\epsilon \log \epsilon=\mathcal{O}(\epsilon\log\epsilon)$.

\begin{figure}[htb]
\centerline{\includegraph[width=3.5in]{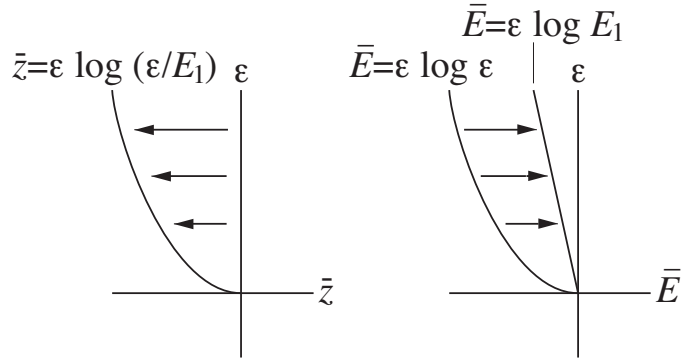}}
\caption{Change of $\bar z$ and $\bar E$ along orbits of \eqref{r1d}--\eqref{r3d} as a function of $\epsilon$.  In the second graph it is assumed that $E_1<1$.}
\label{fig:domain}
\end{figure}

Since $\epsilon$ is constant on solutions of \eqref{r1d}--\eqref{r3d}, we regard it as a parameter.  From the previous paragraph, we need only consider \eqref{r1d}--\eqref{r3d} on the region
$$
\mathcal{D}=\{(\bar x,\bar z,\bar E,\epsilon) : \epsilon \log \frac{\epsilon}{E_1} \le \bar z \le 0, \; \epsilon \log \epsilon \le \bar E \le \epsilon \log E_1 \}.
$$
One can check that on $\mathcal{D}$, the system \eqref{r1d}--\eqref{r3d} is of class $C^{N}$.  In particular, for a given $\bar x^*$, any mixed partial derivative of order up to $N$ of the function $\epsilon^{N+1} b(\bar x,e^{\bar z/\epsilon},e^{\bar E/\epsilon})$ approaches 0 as $(\bar x,\bar z,\bar E,\epsilon) \to (\bar x^*,0,0,0)$ within $\mathcal{D}$.  For example
\begin{multline*}
\frac{\partial^{N} \,\,}{\partial \epsilon^{N}}\epsilon^{N+1} b(\bar x,e^{\bar z/\epsilon},e^{\bar E/\epsilon})=\epsilon^{N+1}D_3b(\bar x,e^{\bar z/\epsilon},e^{\bar E/\epsilon})\left(-\frac{\bar E}{\epsilon^2}\right)^{N} +\cdots \\
=(-1)^{N}\frac{\bar E^{N}}{\epsilon^{N-1}}D_3b(\bar x,e^{\bar z/\epsilon},e^{\bar E/\epsilon})+\cdots .
\end{multline*}
Within $\mathcal{D}$ the quotient
$
\frac{\bar E^{N}}{\epsilon^{N-1}}
$
approaches 0 as $(\bar x,\bar z,\bar E,\epsilon) \to (\bar x^*,0,0,0)$.

The solution of \eqref{r1d}--\eqref{r3d} with initial condition $(\bar x,\bar z,\bar E)=(\bar x_0,0,\bar E_0)$ at $\tau=0$ has $\bar x$-coordinate $\bar x=\phi(\bar x_0,\bar E_0,\epsilon,\tau)$, where $\phi$ is $C^N$ as long as the solution remains in $\mathcal D$.  Thus
\begin{equation}
\label{phi}
\bar x_1=\phi(\bar x_0,\bar E_0,\epsilon,\epsilon \log E_1-\epsilon \log \epsilon)=\phi(\bar x_0,\epsilon \log \epsilon,\epsilon,\epsilon \log E_1-\epsilon \log \epsilon).
\end{equation}
More compactly, $\bar x_1$ is a $C^N$ function of $(\bar x_0,\epsilon,v)$ with $v=\epsilon \log \epsilon$.  Since it follows from Proposition \ref{prop:normalform} that $\bar x_0$ is $C^\infty$ function of $(x_0,\epsilon)$ and $x_1$ is a $C^\infty$ function of $(\bar x_1,\epsilon)$, we see that $x_1$ is a
$C^N$ function of $(x_0,\epsilon,\epsilon\log\epsilon)$.  This proves conclusion (1) of Proposition~\ref{cornerprop1}.

To prove conclusion (3) of Proposition \ref{cornerprop1}, we note that if $\h(x,z)$ 
has property $\mathcal{F}$ in $z$, then so does $k=1/h$, so from  Proposition \ref{prop:normalform}, in \eqref{r1d}--\eqref{r3d} we can take $a=0$.  In consequence, after an integration time of $\mathcal{O}(\epsilon\log\epsilon)$, $\bar x$ cannot have changed more than an amount $\mathcal{O}(\epsilon^{N+2}\log\epsilon)$, so the formula for $\phi$ in \eqref{phi} must be of the form $\bar x_0 + \mathcal{O}(\epsilon^{N+2}\log\epsilon)$. Hence $\phi$ is $o(\epsilon^{N+1})$.   Therefore, when we write $\bar x_1$ as a $C^N$ function of $(\bar x_0,\epsilon,v)$ with $v=\epsilon \log \epsilon$, the $N$th degree Taylor polynomial in $(\epsilon,v)$,
$$
\bar x_1\sim \bar x_0 + \sum_{1\le k+l\le N} a_{kl}(\bar x_0)\epsilon^kv^l,
$$
has all $a_{kl}=0$. Since $\bar x_0$ is $C^\infty$ function of $(x_0,\epsilon)$ and $x_1$ is a $C^\infty$ function of $(\bar x_1,\epsilon)$, it follows that when we write $x_1$ as a $C^N$ function of $(x_0,\epsilon,v)$, the $N$th degree Taylor polynomial in $(\epsilon,v)$ has no terms involving $v$.

\section{Finite differentiability\label{sec:finite}}

Suppose $f$ and $g$ in \eqref{eq3}--\eqref{eq4}, and hence $h$ in \eqref{eq5}--\eqref{eq6} and $k$ in \eqref{r1a}--\eqref{r3a}, are $C^r$.  The coordinate changes of Section \ref{sec:local} then result in a reduction of differentiability.

In particular, $\tilde k$ in \eqref{xtildedot2} is $C^{r-1}$. Now in \eqref{r1b} with $N=1$, $a=0$ and $b=\tilde k$, so for $N=1$ we have $a$ and $b$ of class $C^{r-1}$.

In the induction proof of conclusion (1) of Proposition \ref{prop:normalform}, suppose $p$ and $q$ in \eqref{r1c} are of class $C^s$.  In \eqref{q} the term $\mathcal{O}(\epsilon)$ is actually $\epsilon q_3(y,z,E)$ where $q_3$ is $C^{s-2}$.  The functions $q_1$ and $q_2$ are of class $C^s$, so $\beta$ and $\gamma$ constructed in the proof are of class $C^{s-1}$.  Thus when we replace $N-1$ by $N$ in \eqref{r1c}, $p$ and $q$ will be of class $C^{s-2}$.

It follows that when we produce the normal form \eqref{r1h}--\eqref{r3h}, starting from $f$ and $g$ of class $C^r$, $a$ and $b$ are of class of $C^{r-2(N+1)-1}=C^{r-2N-3}$.  In order to obtain a flow on $\mathcal D$ of class $C^N$ as described in Section \ref{sec:local}, $a$ and $b$ must be of class $C^N$, so we need $r-2N-3\ge N$, or $r\ge 3N+3$.

Thus the $C^N$ function of Propositions \ref{cornerprop1} and \ref{cornerprop2} exists provided $f$ and $g$ are $C^r$,  $r\ge 3N+3$.  Hence in Theorem \ref{th:main}, if $f$ and $g$ are $C^r$,  $r\ge 3N+3$, then conclusions (1) and (2) hold, with $\tilde p$ of class $C^N$.  In particular, $\tilde p$ is $C^1$ if $r\ge6$.

It is somewhat tedious to adapt conclusion~(3) of Theorem~\ref{th:main} to the case of finitely smooth systems, so we do not address this issue here.

\section{Example with logarithmic terms in the  return map}\label{sec:ex}

We saw in Section \ref{sec:local} that the logarithmic terms in Proposition \ref{cornerprop1} appear because an integration requires time $\tau=\mathcal{O}(\epsilon\log\epsilon)$.  Of course one might wonder whether or not such terms could cancel in the end.  In this section we show that in Example \eqref{ex1}--\eqref{ex2}, for any fixed small $\alpha\neq0$, logarithmic terms really do arise in the expansion with respect to $\epsilon$ of the return map.

We rewrite \eqref{ex1}--\eqref{ex2} as
\begin{equation} \label{ex1alt}
\frac{dz}{dx}=\frac{(x+\alpha z)z^2}{\epsilon}.
\end{equation}
For $\alpha=0$, a family of solutions is given by
\begin{equation} \label{z0}
    z_0(x) = \frac{2\epsilon}{2\epsilon + x_0^2 - x^2}.
\end{equation}
The solutions are parameterized by $x_0<0$ in such a way that $z_0(x_0)=1$. Since $z_0(-x_0)=1$ as well, the return map for $\alpha=0$ on the line $z=1$  is simply the mapping $x_0\to -x_0$.

For small $\alpha$, we write the solution of \eqref{ex1alt} with $z(x_0)=1$ as
\begin{equation} \label{zexp}
z(x) = z_0(x) + \alpha z_1(x) + O(\alpha^2).
\end{equation}
Substitution into \eqref{ex1alt}  yields
\[
    \epsilon\frac{dz_1}{dx} = 2xz_0z_1 + z_0^{3},\qquad z_1(x_0)=0.
\]
The solution of this linear differential equation is
\begin{equation} \label{z1}
    z_1(x) = \frac{8\epsilon^{2}}{(2\epsilon+x_0^2-x^2)^2}\int_{x_0}^x\left(2\epsilon+x_0^2-s^2\right)^{-1}ds.
\end{equation}
Since the return map on the line $z=1$ is smooth in~$(x_0,\alpha,\epsilon,\epsilon\log\epsilon)$ (see Remark~\ref{rem}), we write it as a series in~$\alpha$ as well: $x_0\mapsto -x_0 + c(x_0,\epsilon)\alpha + O(\alpha^2)$.
Writing $z(-x_0 + c(x_0,\epsilon)\alpha + O(\alpha^2))=1$ and using \eqref{zexp}, we find that
\[
    z_0(-x_0) + c\alpha z_0'(-x_0) + z_1(-x_0)\alpha + O(\alpha^2) = 1.
\]
Therefore
\[
c(x_0,\epsilon) = -\frac{z_1(-x_0)}{z_0'(-x_0)}.
\]
Using \eqref{z0} and \eqref{z1}, we obtain
\begin{equation}\label{c}
    c(x_0,\epsilon) =  \frac{2\epsilon}{x_0}\int_{x_0}^{-x_0}(2\epsilon + x_0^2-s^2)^{-1}ds.
\end{equation}

\begin{prop}\label{prop:c}
    Expression \eqref{c} is only finitely smooth w.r.t.~$(x_0,\epsilon)$.  More precisely,  there is an analytic function $q(x_0,\epsilon)\not=0$ such that $c(x_0,\epsilon)-q(x_0,\epsilon)\epsilon\log\epsilon$ is analytic in $(x_0,\epsilon)$.
\end{prop}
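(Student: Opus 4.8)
The plan is to evaluate the integral in \eqref{c} in closed form and read off the singular part. First I would compute
\[
\int_{x_0}^{-x_0}\frac{ds}{2\epsilon+x_0^2-s^2}.
\]
Write $A^2 = 2\epsilon + x_0^2$ (with $A>0$ for small $\epsilon$ since $x_0\neq 0$). Then the integrand is $\frac{1}{A^2-s^2}=\frac{1}{2A}\bigl(\frac{1}{A-s}+\frac{1}{A+s}\bigr)$, so the antiderivative is $\frac{1}{2A}\log\frac{A+s}{A-s}=\frac{1}{A}\arctanh(s/A)$. Evaluating between $s=x_0<0$ and $s=-x_0>0$, and using oddness, the integral equals $\frac{2}{A}\arctanh(-x_0/A) = \frac{1}{A}\log\frac{A-x_0}{A+x_0}$ (note $-x_0>0$, so this is positive). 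Substituting back,
\[
c(x_0,\epsilon) = \frac{2\epsilon}{x_0}\cdot\frac{1}{\sqrt{2\epsilon+x_0^2}}\,\log\frac{\sqrt{2\epsilon+x_0^2}-x_0}{\sqrt{2\epsilon+x_0^2}+x_0}.
\]

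Next I would isolate the non-smooth piece. The prefactor $\frac{2\epsilon}{x_0\sqrt{2\epsilon+x_0^2}}$ is analytic in $(x_0,\epsilon)$ near $(x_0^*,0)$ (with $x_0^*<0$, so $x_0\neq0$), since $2\epsilon+x_0^2>0$ there; call it $r(x_0,\epsilon)$, which vanishes to first order in $\epsilon$. For the logarithm, I expand $\sqrt{2\epsilon+x_0^2} = |x_0|\sqrt{1+2\epsilon/x_0^2} = -x_0(1+\epsilon/x_0^2+\dots)$ since $x_0<0$; thus $\sqrt{2\epsilon+x_0^2}-x_0 = -2x_0(1 + O(\epsilon))$ is analytic and nonvanishing, while $\sqrt{2\epsilon+x_0^2}+x_0 = \epsilon/(-x_0)\cdot(1+O(\epsilon))$ — this is the crucial point: the denominator inside the log is $O(\epsilon)$ with an analytic, nonvanishing cofactor. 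Hence
\[
\log\frac{\sqrt{2\epsilon+x_0^2}-x_0}{\sqrt{2\epsilon+x_0^2}+x_0} = -\log\epsilon + \psi(x_0,\epsilon),
\]
where $\psi$ is analytic in $(x_0,\epsilon)$ near $(x_0^*,0)$. Therefore $c(x_0,\epsilon) = -r(x_0,\epsilon)\log\epsilon + r(x_0,\epsilon)\psi(x_0,\epsilon)$. Writing $r(x_0,\epsilon) = \epsilon\, q(x_0,\epsilon)$ with $q$ analytic (dividing out the explicit factor $\epsilon$), we get $c(x_0,\epsilon) = -q(x_0,\epsilon)\,\epsilon\log\epsilon + (\text{analytic})$, and $-q(x_0,\epsilon) = \frac{-2}{x_0\sqrt{2\epsilon+x_0^2}}$, which is manifestly nonzero. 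This gives exactly the claimed decomposition (up to the harmless sign, which can be absorbed into $q$).

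The only real subtlety is bookkeeping the branch of the logarithm and the square root: one must check that all the quantities whose logarithms are taken are positive for $x_0$ near $x_0^*<0$ and $\epsilon\geq0$ small, and that the factorizations $\sqrt{2\epsilon+x_0^2}\pm x_0 = (\text{analytic, nonzero})\times(\text{explicit factor})$ are legitimate — this is immediate from the binomial series for $\sqrt{1+2\epsilon/x_0^2}$, valid since $2\epsilon/x_0^2<1$ for small $\epsilon$. Finite (rather than $C^\infty$) smoothness of $c$ then follows because $\epsilon\log\epsilon \in C^0$ but $\notin C^1$ at $\epsilon=0$, and $q(x_0,0)\neq0$ prevents cancellation; this confirms that the logarithmic term in Proposition \ref{cornerprop1} genuinely survives in the return map, so the $\epsilon\log\epsilon$ dependence in Theorem \ref{th:main}(2) is optimal.
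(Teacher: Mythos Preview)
Your proposal is correct and takes essentially the same approach as the paper: evaluate the integral in \eqref{c} explicitly via the partial-fraction decomposition of $1/(A^2-s^2)$, then isolate the $\log\epsilon$ contribution coming from the factor $\sqrt{2\epsilon+x_0^2}+x_0$, which is $O(\epsilon)$ since $x_0<0$. The only cosmetic difference is that the paper first nondimensionalizes via $\tilde\epsilon=\epsilon/x_0^2$, $s=x_0u$ before doing the partial fractions, whereas you work directly in the original variables; the underlying computation and the identification of the singular term are the same.
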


\begin{proof}
  In \eqref{c} we let $\tilde{\epsilon} = \epsilon/x_0^2$ and $\tilde{c}= (2\epsilon)^{-1} x_0^2 c$, and we make the substitution $s=x_0u$.  We obtain    \[
        \tilde{c}=-2\int_{0}^{1}(2\tilde{\epsilon} + 1-u^2)^{ -1}du= -\int_{0}^{1}\frac{A}{\sqrt{1+2\tilde{\epsilon}}-u}du -\int_{0}^{1}\frac{A}{\sqrt{1+2\tilde{\epsilon}}+u}du,
    \]
with
$$
A=\frac{1}{\sqrt{1+2\tilde{\epsilon}}}.
$$
The second integral is analytic in $\epsilon$ near $\epsilon=0$.  The first is
 $$
- \int_0^1 \frac{A}{\sqrt{1+2\tilde{\epsilon}}-u}du =A\left(\log(\sqrt{1+2\tilde\epsilon}-1)-\log\sqrt{1+2\tilde{\epsilon}} \right) = A\log\tilde{\epsilon}+ O(\tilde{\epsilon}),$$
 where the  term  $O(\tilde{\epsilon})$ is analytic.  Therefore
 $$
 \tilde c = \left(1+r(\tilde\epsilon)\right)\log \tilde\epsilon + s(\tilde \epsilon)
 $$
where $r$ and $s$ are analytic.  The result follows by substituting $\tilde{\epsilon} = \epsilon/x_0^2$ and $\tilde{c}= (2\epsilon)^{-1} x_0^2 c$.
 \end{proof}

Since the return map of Example \eqref{ex1}--\eqref{ex2} is given by $x_0\mapsto -x_0 + c\alpha + O(\alpha^2)$, the finite smoothness of $c(x_0,\epsilon)$ expressed in Proposition~\ref{prop:c} implies the finite smoothness with respect to~$\epsilon$ of the return map for small $\alpha$, which we intended to show.

\section{Acknowledgements}

This work was partly supported by FWO project G093910 (De Maesschalck) and NSF grant  DMS-1211707 (Schecter).

\end{document}